\pgfplotsset{width=9cm,compat=1.5.1}
\newtheorem{lemma}{Lemma}
\newtheorem{theorem}{Theorem}
\newtheorem{corollary}{Corollary}
\newtheorem{remark}{Remark}
\newtheorem{example}{Example} 
\newcommand{\tinyspace}{\mspace{1mu}}
\newcommand{\tr}{\operatorname{Tr}}
\newcommand{\rank}{\operatorname{rank}}
\newcommand{\norm}[1]{\left\lVert\tinyspace#1\tinyspace\right\rVert}
\newcommand{\defeq}{\stackrel{\smash{\textnormal{\tiny def}}}{=}}
\def\C{\mathbb{C}}
\def\R{\mathbb{R}} % That was easy
\def\S{\mathcal{S}}
\def\e{\mathbf{e}}
\def\v{\mathbf{v}}
\def\w{\mathbf{w}}
\def\x{\mathbf{x}}
\def\y{\mathbf{y}}
\def\0{\mathbf{0}}
\begin{document}

\title{Bounding Real Tensor Optimizations via the Numerical Range}  

\author{
	Nathaniel Johnston\footnote{Department of Mathematics \& Computer Science, Mount Allison University, Sackville, NB, Canada E4L 1E4}\textsuperscript{$\ \ $}\footnote{Department of Mathematics \& Statistics, University of Guelph, Guelph, ON, Canada N1G 2W1} \ \ \ and \ \ 
	Logan Pipes$^{*}$
}

% When submitting to arXiv, put hard-coded date here
%\date{June 12, 2019}

\maketitle

\begin{abstract}
    We show how the numerical range of a matrix can be used to bound the optimal value of certain optimization problems over real tensor product vectors. Our bound is stronger than the trivial bounds based on eigenvalues, and can be computed significantly faster than bounds provided by semidefinite programming relaxations. We discuss numerous applications to other hard linear algebra problems, such as showing that a real subspace of matrices contains no rank-one matrix, and showing that a linear map acting on matrices is positive.
\end{abstract}
  
%%%%%%%%%%%%%%%%%%%%%%%%%%%%%%%%%%%%%%%%%%%%%%%%%%%%%%%%%%%%%%%%%%%%%%%%%%  

\section{Introduction}\label{sec:intro}

The numerical range \cite[Chapter~1]{HJ91} of a square complex matrix $A \in M_n(\C)$ is the set
\[
    W(A) \defeq \big\{\mathbf{v}^*A\mathbf{v} : \mathbf{v} \in \C^n, \|\mathbf{v}\| = 1 \big\}.
\]
While it was first introduced and studied over a century ago \cite{Toe18,Hau19}, the numerical range has come into particular prominence over the past two decades or so, thanks to numerous applications that it has found in scientific fields like numerical analysis \cite{Ben21} and quantum information theory \cite{KPLRS09,CHCZ20,CRZ22}.

In this work, we present a new application of the numerical range: we show how it can be used to give a non-trivial bound on the optimal value of a linear function being optimized over the set of real tensor product vectors (i.e., elementary tensors) of unit length. Computing the exact value of such an optimization problem is NP-hard \cite{Gur03},\footnote{The proof of \cite[Proposition~6.5]{Gur03} focuses on optimizing over \emph{complex} product vectors, but the same proof works for real product vectors.} and numerous well-studied problems can be phrased in this way. We thus obtain, as immediate corollaries of our results, ways of using the numerical range to help solve the following (also NP-hard) problems:

\begin{itemize}
    \item Determine whether or not a given subspace of real matrices contains a rank-1 matrix \cite{CP86,NSU17}.

    \item Determine whether or not a given linear map acting on $M_n(\R)$ is positive \cite{Pau03,CDPR22}.
    
    \item Determine whether or not a biquadratic form acting on $\R^m \times \R^n$ is positive semidefinite \cite{Cho75}.
\end{itemize}

While our method is specific to optimizations over \emph{real} product vectors, it can be extended with some limitations to optimizations over complex product vectors as well. This gives some additional applications of our method, particularly from quantum information theory, such as the problem of determining whether or not a given Hermitian matrix is an entanglement witness \cite{Ter00}. In fact, in \cite{HM13} the authors give a list of more than a dozen different problems that are of interest in quantum information theory that are equivalent to this one.

We note that connections between the numerical range and the above problems have appeared in the past, but our bound is new and distinct from known methods. For example, an easy-to-use necessary condition for positivity of a linear map acting on complex matrices in terms of the numerical range appeared in \cite{HLP15}, but our bound instead provides an easy-to-use \emph{sufficient} condition. Similarly, numerous other methods are known for bounding optimization problems like the ones we investigate \cite{DPS04,HNW17}. However, most of these known methods rely on tools like semidefinite programming which, despite being solvable in polynomial time \cite{Lov03}, are quite computationally expensive. These semidefinite programs are typically difficult to apply to matrices with more than a few hundred rows and columns, whereas our bound can be applied to matrices with millions of rows and columns.

\subsection{Organization of Paper}\label{sec:organization}

We start in Section~\ref{sec:prelim} by introducing our notation, the optimization problem that is of interest to us, and some elementary properties of its optimal value. In Section~\ref{sec:main_result}, we present our main results (Theorems~\ref{thm:mu_minmax_from_num_range} and \ref{thm:W_diag_equals_p}), which provide a non-trivial bound on the optimal value of the optimization problem in terms of the numerical range of a certain matrix. In Section~\ref{sec:comparison_with_known} we discuss how our method compares with known semidefinite programming methods of bounding the optimal value of our tensor optimization problem (implementation details of our method are provided in Appendix~\ref{sec:implementation}, and code is provided at \cite{Pip22}), and we show in particular that our method works much faster and is applicable to matrices that are much larger.

In Section~\ref{sec:applications} we present some applications of our bound: in Section~\ref{sec:matrix_subspaces} we show that our result implies a simple-to-check sufficient condition for a subspace of matrices to not contain any rank-one matrices, and in Section~\ref{sec:pos_lin_map} we derive a simple-to-check sufficient condition for a linear map acting on matrices to be positive (or equivalently, for a biquadratic form to be positive semidefinite). We then generalize our results in two ways: in Section~\ref{sec:complex} we generalize our results to optimizations over complex product vectors (with some necessary limitations), and in Section~\ref{sec:multipartite} we generalize our results to the setting where more than two spaces are tensored together.

\section{Optimization over Product Vectors}\label{sec:prelim}

We use $M_{m,n}$ to denote the set of $m \times n$ matrices with real entries, $M_n$ for the special case when $m = n$, and $M_n^{\textup{S}}$ for the subset of them that are symmetric (i.e., the matrices $A \in M_n$ satisfying $A^T = A$). In the few instances where we consider sets of complex matrices, we denote them by $M_{m,n}(\C)$ or $M_n(\C)$, as appropriate. We use ``$\otimes$'' to denote the usual tensor (i.e., Kronecker) product, and $\mathbf{e_j}$ denotes the $j$-th standard basis vector of $\R^n$ (i.e., the vector with $1$ in its $j$-th entry and $0$ elsewhere).

The main problem that we consider in this paper is: given $B \in M_m \otimes M_n$, how can we compute or bound the quantities
\begin{align}\begin{split}\label{eq:main_minmax}
    \mu_{\textup{min}}(B) & \defeq \min_{\mathbf{v} \in \R^m,\mathbf{w} \in \R^n}\big\{ (\mathbf{v} \otimes \mathbf{w})^TB(\mathbf{v} \otimes \mathbf{w}) : \|\mathbf{v}\| = \|\mathbf{w}\| = 1 \big\} \quad \text{and} \\
    \mu_{\textup{max}}(B) & \defeq \max_{\v \in \R^m,\w \in \R^n}\big\{ (\v \otimes \mathbf{w})^TB(\v \otimes \w) : \|\v\| = \|\w\| = 1 \big\}?
\end{split}\end{align}

When $B$ is symmetric, these quantities can be thought of as analogous to the maximal and minimal eigenvalues of $B$, but with the restriction that the optimization takes place only over unit-length \emph{product} vectors, rather than all unit-length vectors. In particular, if $B$ is symmetric and we let $\lambda_{\textup{max}}(B)$ and $\lambda_{\textup{min}}(B)$ denote the maximal and minimal eigenvalues of $B$, respectively, then it follows immediately from the Rayleigh quotient that
\begin{align}\label{eq:eigenvalue_bounds}
    \mu_{\textup{min}}(B) \geq \lambda_{\textup{min}}(B) \quad \text{and} \quad \mu_{\textup{max}}(B) \leq \lambda_{\textup{max}}(B).
\end{align}

Similarly, if we define the \emph{partial transpose} of a matrix $A = \sum_j X_j \otimes Y_j \in M_m \otimes M_n$ by
\begin{align}\label{eq:part_trans}
    A^\Gamma \defeq \sum_j X_j \otimes Y_j^T,
\end{align}
and note that the partial transpose is a well-defined linear map on $M_m \otimes M_n$,\footnote{The partial transpose has received significant attention when acting on complex matrices thanks to its uses in quantum information theory \cite{Per96}, but it has recently been used in the real case too \cite{CDPR22}.} then we see that
\begin{align}\begin{split}\label{eq:vw_part_trans}
    (\v \otimes \w)^T B(\v \otimes \w) & = \tr\big((\v\v^T \otimes \w\w^T)B\big) \\
    & = \tr\big((\v\v^T \otimes (\w\w^T)^T) B\big) \\
    & = \tr\big((\v\v^T \otimes \w\w^T)B^\Gamma\big) = (\v \otimes \w)^TB^\Gamma(\v \otimes \w)
\end{split}\end{align}
for all $\v \in \R^m$ and $\w \in \R^n$. It follows that if $B$ is symmetric then
\begin{align}\label{eq:pt_eigenvalue_bounds}
    \mu_{\textup{min}}(B) \geq \lambda_{\textup{min}}(B^\Gamma) \quad \text{and} \quad \mu_{\textup{max}}(B) \leq \lambda_{\textup{max}}(B^\Gamma).
\end{align}

We call the eigenvalue bounds of Inequalities~\eqref{eq:eigenvalue_bounds} and~\eqref{eq:pt_eigenvalue_bounds} the \emph{trivial bounds} on $\mu_{\textup{max}}(B)$ and $\mu_{\textup{min}}(B)$, and an argument similar to the one that we used to derive them shows that if $p \in \R$ then
\begin{align}\label{eq:conv_bounds}
    \mu_{\textup{min}}(B) \geq \lambda_{\textup{min}}\big(pB + (1-p)B^\Gamma\big) \quad \text{and} \quad \mu_{\textup{max}}(B) \leq \lambda_{\textup{max}}\big(pB + (1-p)B^\Gamma\big)
\end{align}
whenever $B$ is symmetric.

Inequalities~\eqref{eq:conv_bounds} can only be used when $B$ is symmetric, since otherwise $pB + (1-p)B^\Gamma$ might not even have real eigenvalues. However, this is not really a restriction: since $\v^TY\v = 0$ whenever $Y$ is skew-symmetric (i.e., $Y^T = -Y$), it follows that if $B$ has Cartesian decomposition $B = X + Y$ (where $X = (B+B^T)/2$ is symmetric and $Y = (B-B^T)/2$ is skew-symmetric) then
\begin{align}\label{eq:sym_part_equal}
    \mu_{\textup{min}}(B) = \mu_{\textup{min}}(X) \quad \text{and} \quad \mu_{\textup{max}}(B) = \mu_{\textup{max}}(X),
\end{align}
so Inequalities~\eqref{eq:conv_bounds} can be applied to $X = (B+B^T)/2$ instead. In fact, the following lemma shows that we can restrict our attention even further to matrices that also equal their own \emph{partial} transpose:

\begin{lemma}\label{lem:bisymmetric_part_real}
    Let $B \in M_m \otimes M_n$ be symmetric. There exist unique symmetric matrices $X, Y \in M_m \otimes M_n$ such that $X^\Gamma = X$, $Y^\Gamma = -Y$, and $B = X + Y$, and they are
    \begin{align}\label{eq:partial_sym_parts}
        X = \frac{1}{2}(B + B^\Gamma) \quad \text{and} \quad Y = \frac{1}{2}(B - B^\Gamma).
    \end{align}
    Furthermore, $(\v \otimes \w)^T Y(\v \otimes \w) = 0$ for all $\v \in \R^m$ and $\w \in \R^n$, and
    \[
        \mu_{\textup{min}}(B) = \mu_{\textup{min}}(X) \quad \text{and} \quad \mu_{\textup{max}}(B) = \mu_{\textup{max}}(X).
    \]
\end{lemma}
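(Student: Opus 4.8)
The plan is to verify the two displayed formulas for $X$ and $Y$ directly, read off uniqueness from those formulas, and then obtain the two ``furthermore'' claims as immediate consequences of Equation~\eqref{eq:vw_part_trans}, which already says that a product vector $\v\otimes\w$ cannot distinguish a matrix from its partial transpose. The only preliminary needed is two elementary facts about $\Gamma$: that it is an involution, $(A^\Gamma)^\Gamma = A$ (immediate from $(Y_j^T)^T = Y_j$ in~\eqref{eq:part_trans}), and that it commutes with the ordinary transpose, $(A^\Gamma)^T = (A^T)^\Gamma$, since both operations merely transpose a selection of the tensor factors. In particular, when $B^T = B$ this gives $(B^\Gamma)^T = (B^T)^\Gamma = B^\Gamma$, so $B^\Gamma$ is again symmetric.

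With those facts in hand, set $X = \tfrac12(B + B^\Gamma)$ and $Y = \tfrac12(B - B^\Gamma)$. Both are symmetric, being real linear combinations of the symmetric matrices $B$ and $B^\Gamma$, and they obviously sum to $B$; applying $\Gamma$ and using linearity together with involutivity gives $X^\Gamma = \tfrac12(B^\Gamma + B) = X$ and $Y^\Gamma = \tfrac12(B^\Gamma - B) = -Y$, as required. For uniqueness, suppose $B = X' + Y'$ with $X', Y'$ symmetric, $(X')^\Gamma = X'$, and $(Y')^\Gamma = -Y'$; applying $\Gamma$ yields $B^\Gamma = X' - Y'$, and solving this together with $B = X' + Y'$ forces $X' = \tfrac12(B+B^\Gamma)$ and $Y' = \tfrac12(B-B^\Gamma)$, i.e.\ $X' = X$ and $Y' = Y$.

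Finally, for any $\v \in \R^m$ and $\w \in \R^n$, Equation~\eqref{eq:vw_part_trans} gives $(\v\otimes\w)^TB(\v\otimes\w) = (\v\otimes\w)^TB^\Gamma(\v\otimes\w)$, so $(\v\otimes\w)^TY(\v\otimes\w) = \tfrac12\big((\v\otimes\w)^TB(\v\otimes\w) - (\v\otimes\w)^TB^\Gamma(\v\otimes\w)\big) = 0$; hence $(\v\otimes\w)^TB(\v\otimes\w) = (\v\otimes\w)^TX(\v\otimes\w)$ for every unit product vector, and since the optimizations defining $\mu_{\textup{min}}$ and $\mu_{\textup{max}}$ then have identical objective functions for $B$ and for $X$, we conclude $\mu_{\textup{min}}(B) = \mu_{\textup{min}}(X)$ and $\mu_{\textup{max}}(B) = \mu_{\textup{max}}(X)$ --- essentially a restatement of the reasoning behind~\eqref{eq:sym_part_equal}. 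I do not expect a genuine obstacle; the only step requiring a moment's care is checking that $B^\Gamma$ is symmetric when $B$ is, after which the rest is one-line linear algebra.
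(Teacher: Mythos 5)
Your proof is correct and follows essentially the same path as the paper's: verify the formulas for $X$ and $Y$, obtain uniqueness by applying $\Gamma$ to $B = X+Y$ and solving, and deduce the vanishing of $Y$ on product vectors from the partial-transpose identity~\eqref{eq:vw_part_trans}. The only cosmetic difference is that you apply~\eqref{eq:vw_part_trans} to $B$ and subtract, whereas the paper applies the same identity directly to $Y$ using $Y^\Gamma=-Y$; you also spell out the ``clear'' step that $B^\Gamma$ is symmetric because $\Gamma$ commutes with the ordinary transpose, which is a welcome bit of extra care but not a different argument.
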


\begin{proof}
    It is clear that the matrices $X$ and $Y$ from Equation~\eqref{eq:partial_sym_parts} are symmetric and satisfy $X^\Gamma = X$, $Y^\Gamma = -Y$, and $B = X+Y$. To see that they are unique, just notice that $B = X+Y$ implies $B^\Gamma = X^\Gamma + Y^\Gamma = X - Y$, so $B + B^\Gamma = (X+Y) + (X-Y) = 2X$, so $X = (B + B^\Gamma)/2$, as claimed. Uniqueness of $Y$ can similarly be derived by computing $B - B^\Gamma$.
    
    To see that $(\v \otimes \w)^T Y(\v \otimes \w) = 0$ for all $\v \in \R^m$ and $\w \in \R^n$, we just note that the same argument used in Equation~\eqref{eq:vw_part_trans} tells us that
    \[
        (\v \otimes \w)^T (-Y)(\v \otimes \w) = (\v \otimes \w)^T Y^\Gamma(\v \otimes \w) = (\v \otimes \w)^T Y(\v \otimes \w).
    \]
    Since $B = X + Y$, this immediately implies $\mu_{\textup{max}}(B) = \mu_{\textup{max}}(X)$ and $\mu_{\textup{min}}(B) = \mu_{\textup{min}}(X)$.
\end{proof}

The above lemma can be thought of as a ``partial'' version of the usual Cartesian decomposition, and when combined with the usual Cartesian decomposition it tells us that for any $B \in M_m \otimes M_n$ we have
\[
    \mu_{\textup{min}}(B) = \frac{1}{4}\mu_{\textup{min}}\big(B + B^T + B^\Gamma + (B^T)^\Gamma\big) \quad \text{and} \quad \mu_{\textup{max}}(B) = \frac{1}{4}\mu_{\textup{max}}\big(B + B^T + B^\Gamma + (B^T)^\Gamma\big).
\]

\section{Main Results}\label{sec:main_result}

We now present our main pair of results, which bound the optimal values of the optimization problems~\eqref{eq:main_minmax} in an easily-computable way in terms of the numerical range. Given $B \in M_m \otimes M_n$, consider the following set (recall that $W(B + iB^\Gamma)$ is the numerical range of the complex matrix $B+iB^\Gamma$):
\begin{align}\label{eq:def_W_diag}
    W^{1+i}(B) \defeq \big\{c \in \R : c(1+i) \in W(B + iB^\Gamma)\big\}.
\end{align}

This set is always non-empty, since it contains all of the diagonal entries of $B$, for example. To see this, notice that if $\mathbf{e_j}$ is the $j$-th standard basis vector of $\R^n$ ($1 \leq j \leq n$), and we choose $\mathbf{x} = \mathbf{e_j} \otimes \mathbf{e_k}$ for some $1 \leq j \leq m$ and $1 \leq k \leq n$, then
\[
    \x^*(B+iB^\Gamma)\x = \x^*B\x + i\x^*B^\Gamma\x = b_{(j,j),(k,k)}(1 + i),
\]
where $b_{(j,j),(k,k)}$ denotes the $(k,k)$-entry of the $(j,j)$-block of $B$. The set $W^{1+i}(B)$ is furthermore convex and compact since $W(A)$ is convex and compact \cite[Section~1.3]{HJ91} for all complex matrices $A$. It follows that $W^{1+i}(B)$ is a closed and bounded interval of real numbers. We give names to its endpoints:
\begin{align}\label{eq:W_diag_bounds}
    W^{1+i}_{\textup{min}}(B) \defeq \min\big\{c \in W^{1+i}(B)\big\} \quad \text{and} \quad W^{1+i}_{\textup{max}}(B) \defeq \max\big\{c \in W^{1+i}(B)\big\}.
\end{align}

Our first main result describes how $W^{1+i}_{\textup{min}}(B)$ and $W^{1+i}_{\textup{max}}(B)$ relate to $\mu_{\textup{min}}(B)$ and $\mu_{\textup{max}}(B)$.

\begin{theorem}\label{thm:mu_minmax_from_num_range}
    Let $B \in M_m \otimes M_n$. Then
    \begin{align*}
        \mu_{\textup{min}}(B) \geq W^{1+i}_{\textup{min}}(B) \quad \text{and} \quad \mu_{\textup{max}}(B) \leq W^{1+i}_{\textup{max}}(B).
    \end{align*}
\end{theorem}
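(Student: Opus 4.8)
The plan is to show that the set of attainable values
$\{(\v\otimes\w)^T B(\v\otimes\w) : \v \in \R^m,\ \w \in \R^n,\ \|\v\|=\|\w\|=1\}$
is contained in the interval $W^{1+i}(B)$. Once that inclusion is in hand, both claimed inequalities follow immediately, since $\mu_{\textup{min}}(B)$ and $\mu_{\textup{max}}(B)$ are the minimum and maximum of the left-hand set, while $W^{1+i}_{\textup{min}}(B)$ and $W^{1+i}_{\textup{max}}(B)$ are the minimum and maximum of the right-hand set, and the extrema over a subset are at least (resp.\ at most) the extrema over the whole set — here the min and max on the right are genuinely attained because, as recalled just before the theorem, $W^{1+i}(B)$ is a closed bounded interval.

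So the first (and essentially only) step would be: take an arbitrary real product unit vector $\x = \v \otimes \w$, regard it as a unit vector in $\C^{mn}$, and compute
$\x^*(B + iB^\Gamma)\x = \x^T B \x + i\,\x^T B^\Gamma \x$.
The crux is to apply the identity established in Equation~\eqref{eq:vw_part_trans} — which holds precisely because $\x$ is real — giving $\x^T B^\Gamma \x = \x^T B \x$. Hence $\x^*(B + iB^\Gamma)\x = (\x^T B \x)(1+i)$. Setting $c \defeq \x^T B \x \in \R$, this value lies in $W(B + iB^\Gamma)$ by definition of the numerical range, and since it has the form $c(1+i)$, the defining condition of $W^{1+i}(B)$ in Equation~\eqref{eq:def_W_diag} is met, so $c \in W^{1+i}(B)$. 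As $\x = \v\otimes\w$ ranged over all real product unit vectors, this is exactly the desired inclusion, and the theorem follows.

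I do not expect a real obstacle in this argument: the entire content is the observation that real product vectors are exactly the vectors on which the partial-transpose expectation coincides with the original expectation, which pins their quadratic-form values onto the $45^\circ$ ray $\{c(1+i) : c \in \R\}$ and therefore into $W^{1+i}(B)$. The more delicate questions — namely, that this bound can strictly improve on the trivial eigenvalue bounds of \eqref{eq:eigenvalue_bounds}, \eqref{eq:pt_eigenvalue_bounds}, and \eqref{eq:conv_bounds}, and a more usable description of $W^{1+i}(B)$ itself — are presumably what Theorem~\ref{thm:W_diag_equals_p} and the subsequent sections are for, rather than part of this statement.
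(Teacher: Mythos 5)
Your proof is correct and is essentially the same argument as the paper's: the key step in both is that a real product unit vector $\x=\v\otimes\w$ satisfies $\x^T B^\Gamma \x = \x^T B \x$, so $\x^*(B+iB^\Gamma)\x$ lies on the ray $\{c(1+i):c\in\R\}$ and hence $\x^T B\x \in W^{1+i}(B)$. The only cosmetic difference is that you prove the set inclusion for every product vector and then read off both inequalities at once, whereas the paper applies the computation directly to an optimizer of $\mu_{\textup{max}}(B)$ obtained by compactness and remarks that the $\mu_{\textup{min}}$ case is symmetric.
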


\begin{proof}
    We only prove the inequality on the right, as the one on the left follows via an almost identical argument.
    
    Thanks to compactness of the set of unit product vectors, there exists $\x = \v \otimes \w \in \R^m \otimes \R^n$ so that $\|\x\| = 1$ and $\x^T B\x = \mu_{\textup{max}}(B)$. Routine algebra then shows that
    \[
        \x^T B^{\Gamma}\x = \tr\big(B^{\Gamma}\x\x^T\big) = \tr\big(B(\x\x^T)^{\Gamma}\big) = \tr\big(B\x\x^T\big) = \x^T B\x = \mu_{\textup{max}}(B)
    \]
    too, where the central equality used the fact that $\mathbf{x}$ is a real product vector, so
    \[
        (\mathbf{x}\mathbf{x}^T)^{\Gamma} = \mathbf{v}\mathbf{v}^T \otimes (\mathbf{w}\mathbf{w}^T)^T = \mathbf{v}\mathbf{v}^T \otimes \mathbf{w}\mathbf{w}^T = \mathbf{x}\mathbf{x}^T.
    \]
    It follows that
    \[
        \mathbf{x}^T (B + iB^\Gamma)\mathbf{x} = \mathbf{x}^T B\mathbf{x} + i\mathbf{x}^T B^\Gamma\mathbf{x} = (1+i)\mu_{\textup{max}}(B) \in W(B + iB^\Gamma),
    \]
   so $\mu_{\textup{max}}(B) \in W^{1+i}(B)$, which shows that $\mu_{\textup{max}}(B) \leq W^{1+i}_{\textup{max}}(B)$, as desired.
\end{proof}

%Roughly speaking, the bound of Theorem~\ref{thm:mu_minmax_from_num_range} is useful because the numerical range allows us to optimize the Rayleigh quotient of $B$ and $B^\Gamma$ over the same vector $\x$ simultaneously, rather than just optimizing them individually (which would give the trivial eigenvalue bounds on $\mu_{\textup{min}}(B)$ and $\mu_{\textup{max}}(B)$).

Our second main result establishes how the bound of Theorem~\ref{thm:mu_minmax_from_num_range} compares with the eigenvalue bounds of Inequalities~\eqref{eq:conv_bounds}. We do not prove this theorem here, however, as it arises as a special case of a more general result that we will prove in Section~\ref{sec:multipartite} (see Theorem~\ref{thm:multi_affine_minmax} in particular).
\begin{theorem}\label{thm:W_diag_equals_p}
    Let $B \in M_m \otimes M_n$ be symmetric. Then
    \begin{align*}
        W^{1+i}_{\textup{min}}(B) = \max_{p \in \R}\big\{\lambda_{\textup{min}}(pB + (1-p)B^\Gamma)\big\} \quad \text{and} \quad W^{1+i}_{\textup{max}}(B) = \min_{p \in \R}\big\{\lambda_{\textup{max}}(pB + (1-p)B^\Gamma)\big\}.
    \end{align*}
\end{theorem}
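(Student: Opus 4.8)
The plan is to prove the statement about $W^{1+i}_{\textup{max}}(B)$; the one about $W^{1+i}_{\textup{min}}(B)$ follows by applying that result to $-B$ (noting $(-B)^\Gamma = -B^\Gamma$ and $\lambda_{\textup{max}}(-A) = -\lambda_{\textup{min}}(A)$). The key observation is that $c(1+i) \in W(B + iB^\Gamma)$ means there is a unit vector $\x \in \C^{mn}$ with $\x^*B\x = c$ and $\x^*B^\Gamma\x = c$ (using that $B$ and $B^\Gamma$ are real symmetric, hence their numerical ranges are real, so the real and imaginary parts of $\x^*(B+iB^\Gamma)\x$ are exactly $\x^*B\x$ and $\x^*B^\Gamma\x$). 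Equivalently, $\x^*(pB + (1-p)B^\Gamma)\x = c$ for \emph{every} $p \in \R$. So $c \in W^{1+i}(B)$ precisely when there is a single unit vector that is ``pinned'' to eigenvalue-level $c$ by every matrix in the real-affine pencil $\{pB + (1-p)B^\Gamma : p \in \R\}$.

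First I would prove the easy inequality $W^{1+i}_{\textup{max}}(B) \leq \min_{p}\lambda_{\textup{max}}(pB + (1-p)B^\Gamma)$: if $c = W^{1+i}_{\textup{max}}(B)$ is witnessed by a unit vector $\x$, then for each $p$ the Rayleigh quotient gives $\lambda_{\textup{max}}(pB + (1-p)B^\Gamma) \geq \x^*(pB+(1-p)B^\Gamma)\x = c$, and taking the minimum over $p$ preserves this. The reverse inequality is the substantive direction. Here I would set $q := \min_{p \in \R}\lambda_{\textup{max}}(pB + (1-p)B^\Gamma)$ and seek a unit vector $\x$ with $\x^*B\x = \x^*B^\Gamma\x = q$, which would show $q(1+i) \in W(B+iB^\Gamma)$ and hence $q \leq W^{1+i}_{\textup{max}}(B)$. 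The natural route is a minimax / saddle-point argument: consider the function $(p,\x) \mapsto \x^*(pB + (1-p)B^\Gamma)\x$ on $\R \times \{\x : \|\x\|=1\}$, which is affine (hence concave \emph{and} convex) in $p$ and continuous in $\x$; one wants to exchange $\min_p \max_\x$ with $\max_\x \min_p$. The max over the unit sphere is over a compact set but the sphere is not convex, so a direct appeal to Sion's minimax theorem needs care — I would instead lift to density matrices, replacing $\x\x^*$ by $\rho$ ranging over the (convex, compact) set of unit-trace PSD matrices, write everything as $\langle \rho, pB + (1-p)B^\Gamma\rangle$, apply Sion's theorem there to get a $\rho_\star$ achieving $\max_\rho \min_p$, and then argue that at the optimum $\rho_\star$ can be taken to be a rank-one projection (or at least that its spectral decomposition yields a rank-one witness), because $\min_p \langle \rho, \cdot \rangle$ is concave in $\rho$ and its maximum over the spectraplex is attained at an extreme point when restricted to an appropriate face. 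An alternative, cleaner for write-up, is the optimality-of-$p$ route: let $p_\star$ attain $\min_p \lambda_{\textup{max}}(pB+(1-p)B^\Gamma)$; by first-order optimality of the eigenvalue function, the derivative condition forces the top eigenspace of $A_\star := p_\star B + (1-p_\star)B^\Gamma$ to contain a unit vector $\x$ with $\x^*(B - B^\Gamma)\x = 0$, i.e. $\x^*B\x = \x^*B^\Gamma\x$; since $\x$ is a top eigenvector of $A_\star$, this common value equals $\langle \x, A_\star \x\rangle = \lambda_{\textup{max}}(A_\star) = q$, giving exactly the witness we need.

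The main obstacle is the step just sketched: justifying that at the optimal $p_\star$ there is a \emph{single} unit vector in the top eigenspace of $A_\star$ that is simultaneously annihilated by $B - B^\Gamma$. When $\lambda_{\textup{max}}(A_\star)$ is a simple eigenvalue, $\lambda_{\textup{max}}$ is differentiable at $p_\star$ and the stationarity condition reads $\x^*(B - B^\Gamma)\x = 0$ for the (unique up to phase) top eigenvector, which is immediate. The subtlety is the degenerate case, where $\lambda_{\textup{max}}$ is only a max of several smooth branches and one must use the Clarke subdifferential / Danskin-type formula: $0$ lies in the subdifferential of $p \mapsto \lambda_{\textup{max}}(pB + (1-p)B^\Gamma)$ at $p_\star$, which unwinds to the existence of a density matrix $\rho$ supported on the top eigenspace with $\langle \rho, B - B^\Gamma\rangle = 0$; one then diagonalizes $\rho$ and picks an eigenvector, but must check that the ``$= 0$'' survives passing from the mixture to a pure state — which it need not for a single term, so instead I would argue via convexity that the function $\x \mapsto \x^*(B-B^\Gamma)\x$ on the top eigenspace, being continuous and taking both signs (or being identically zero) by the mixture condition, hits $0$ by an intermediate-value argument on the sphere of that eigenspace. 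I would, however, expect the paper to instead derive Theorem~\ref{thm:W_diag_equals_p} as the promised special case of the multipartite Theorem~\ref{thm:multi_affine_minmax}, whose proof presumably packages this minimax argument once in the general affine-pencil setting; so in the write-up I would either defer entirely to Section~\ref{sec:multipartite} or give the self-contained optimality-condition proof above, flagging the degenerate-eigenvalue case as the only delicate point.
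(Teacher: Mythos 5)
Your proposal is correct, but it takes a genuinely different route from the paper's. The paper proves Theorem~\ref{thm:W_diag_equals_p} as the $p=2$, $P = \{\emptyset,\{2\}\}$ case of Theorem~\ref{thm:multi_affine_minmax}, and the engine there is \emph{Farkas' Lemma for semidefinite programs}: they fix $\varepsilon > 0$, set up the matrices $A_1 = B - B^\Gamma$ and $C = B - (W^{1+i}_\textup{max}(B)+\varepsilon)I$, rule out the existence of a PSD dual certificate $Y$, and read off a $p$ with $\lambda_{\textup{max}}(pB+(1-p)B^\Gamma) < W^{1+i}_\textup{max}(B) + \varepsilon$ directly from the lemma's conclusion. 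Your argument instead runs through \emph{first-order optimality of the minimizer} $p_\star$ of the convex function $p \mapsto \lambda_\textup{max}(pB + (1-p)B^\Gamma)$: the subdifferential of $\lambda_\textup{max}$ (Danskin-type formula) produces a density matrix $\rho$ supported on the top eigenspace of $A_\star$ with $\langle \rho, B - B^\Gamma\rangle = 0$, and then you extract a rank-one witness by an intermediate-value argument for the (real-valued, continuous) quadratic form $\x \mapsto \x^*(B-B^\Gamma)\x$ on the connected unit sphere of that eigenspace. That patch is exactly the right fix for the degenerate-eigenvalue case, and you are right that naively picking an eigenvector of $\rho$ does not work. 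Interestingly, the paper has a symmetric gloss on its side: the claim that ``it suffices (thanks to the spectral decomposition) to consider rank-one $Y$'' is not literally a consequence of the spectral decomposition; for $k=2$ it works because the complex joint numerical range of two Hermitian matrices is convex (Toeplitz--Hausdorff), which plays the same role as your IVT step. What the two routes buy: the Farkas/SDP-duality route generalizes mechanically to any number $k$ of affine terms (hence the multipartite Theorem~\ref{thm:multi_affine_minmax}), whereas your IVT argument is tailored to zeroing out a \emph{single} quadratic form and would not extend directly to $k \geq 3$; on the other hand your route avoids SDP duality and is perhaps more self-contained. Two small points you should tighten in a write-up: (i) your Sion-based alternative has a genuine compactness issue since $p$ ranges over all of $\R$; (ii) you assume a finite minimizer $p_\star$ exists, which does hold here but deserves a sentence — since $B - B^\Gamma$ is symmetric and traceless, it is either zero (trivial case) or has eigenvalues of both signs, so $\lambda_\textup{max}(B^\Gamma + p(B-B^\Gamma)) \to +\infty$ as $p \to \pm\infty$ and the convex function attains its minimum.
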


Another way of phrasing Theorem~\ref{thm:mu_minmax_from_num_range} is as saying that $\mu_{\textup{min}}(B),\mu_{\textup{max}}(B) \in W^{1+i}(B)$. Similarly, Theorem~\ref{thm:W_diag_equals_p} says exactly that, if $B$ is symmetric, then $W^{1+i}(B)$ is the following intersection of subintervals of $\R$:
\[
    W^{1+i}(B) = \bigcap_{p \in \R} \big[ \lambda_{\textup{min}}(pB + (1-p)B^\Gamma), \lambda_{\textup{max}}(pB + (1-p)B^\Gamma)\big].
\]
These facts are illustrated schematically in Figure~\ref{fig:main_theorem_schem}.

\begin{figure}[htb]
\begin{centering}
	\begin{tikzpicture}[yscale=1,xscale=1.33333333]
		% Vertical grid lines
		\foreach \x in {-5,-4,...,5} {
			\draw[ultra thin, gray!50!white] (\x, -3.4) -- (\x, 3.4);
		}

		% Horizontal grid lines
		\foreach \y in {-3,-2,...,3} {
			\draw[ultra thin, gray!50!white] (-5.375, \y) -- (5.375, \y);
		}

		% Axes
		\draw[->] (-5.5, 0) -- (5.6, 0) node[anchor=west] {$\mathrm{Re}$}; % X axis
		\draw[->] (0, -3.5) -- (0, 3.6) node[anchor=south] {$\mathrm{Im}$}; % Y axis
		
		\draw[black] (-3.4,-3.4) -- (3.4,3.4) node[anchor=south west]{$\{c(1+i) : c \in \R\}$}; % y=x

		% W(A)
		\draw[fill=gray!50, fill opacity=0.8]
(2.328652, -0.642872) -- (2.326706, -0.735382) -- (2.320740, -0.830075) -- (2.310563, -0.926824) -- (2.296002, -1.025380) -- (2.276921, -1.125366) -- (2.253244, -1.226289) -- (2.224964, -1.327559) -- (2.192159, -1.428515) -- (2.154989, -1.528461) -- (2.113692, -1.626704) -- (2.068572, -1.722597) -- (2.019976, -1.815562) -- (1.968276, -1.905120) -- (1.913846, -1.990901) -- (1.857041, -2.072646) -- (1.798183, -2.150201) -- (1.737550, -2.223506) -- (1.675372, -2.292573) -- (1.611825, -2.357475) -- (1.547039, -2.418322) -- (1.481100, -2.475246) -- (1.414055, -2.528392) -- (1.345923, -2.577898) -- (1.276702, -2.623893) -- (1.206375, -2.666487) -- (1.134925, -2.705770) -- (1.062335, -2.741806) -- (0.988602, -2.774635) -- (0.913741, -2.804275) -- (0.837792, -2.830723) -- (0.760825, -2.853961) -- (0.682940, -2.873958) -- (0.604274, -2.890679) -- (0.524991, -2.904089) -- (0.445285, -2.914158) -- (0.365368, -2.920870) -- (0.285458, -2.924220) -- (0.205771, -2.924221) -- (0.126501, -2.920900) -- (0.047804, -2.914293) -- (-0.030224, -2.904437) -- (-0.107568, -2.891355) -- (-0.184316, -2.875041) -- (-0.260696, -2.855428) -- (-0.337115, -2.832351) -- (-0.414225, -2.805490) -- (-0.493008, -2.774283) -- (-0.574929, -2.737786) -- (-0.662144, -2.694455) -- (-0.757813, -2.641803) -- (-0.866449, -2.575923) -- (-0.994008, -2.491043) -- (-1.146786, -2.379870) -- (-1.327364, -2.236562) -- (-1.527740, -2.063489) -- (-1.727511, -1.875923) -- (-1.905425, -1.694397) -- (-2.052352, -1.531408) -- (-2.170700, -1.388517) -- (-2.266956, -1.261835) -- (-2.346982, -1.146789) -- (-2.414839, -1.039938) -- (-2.473054, -0.939171) -- (-2.523150, -0.843402) -- (-2.566091, -0.752205) -- (-2.602565, -0.665495) -- (-2.633153, -0.583309) -- (-2.658404, -0.505662) -- (-2.678857, -0.432476) -- (-2.695038, -0.363566) -- (-2.707439, -0.298640) -- (-2.716507, -0.237326) -- (-2.722628, -0.179192) -- (-2.726124, -0.123764) -- (-2.727246, -0.070536) -- (-2.726171, -0.018972) -- (-2.722997, 0.031505) -- (-2.717739, 0.081535) -- (-2.710306, 0.131846) -- (-2.700481, 0.183311) -- (-2.687869, 0.237026) -- (-2.671821, 0.294433) -- (-2.651298, 0.357506) -- (-2.624644, 0.429054) -- (-2.589213, 0.513179) -- (-2.540743, 0.615961) -- (-2.472458, 0.746273) -- (-2.374137, 0.916162) -- (-2.232606, 1.138686) -- (-2.037419, 1.419057) -- (-1.794433, 1.738959) -- (-1.534953, 2.052770) -- (-1.298947, 2.315272) -- (-1.108344, 2.510314) -- (-0.963396, 2.646740) -- (-0.854704, 2.740773) -- (-0.772193, 2.806310) -- (-0.708034, 2.853016) -- (-0.656750, 2.887150) -- (-0.614624, 2.912704) -- (-0.579137, 2.932243) -- (-0.548554, 2.947446) -- (-0.521661, 2.959435) -- (-0.497585, 2.968979) -- (-0.475687, 2.976613) -- (-0.455487, 2.982718) -- (-0.436616, 2.987569) -- (-0.418782, 2.991364) -- (-0.401752, 2.994247) -- (-0.385333, 2.996323) -- (-0.369358, 2.997666) -- (-0.353684, 2.998324) -- (-0.338179, 2.998324) -- (-0.322720, 2.997676) -- (-0.307188, 2.996371) -- (-0.291463, 2.994384) -- (-0.275419, 2.991668) -- (-0.258919, 2.988159) -- (-0.241811, 2.983763) -- (-0.223918, 2.978356) -- (-0.205028, 2.971773) -- (-0.184880, 2.963788) -- (-0.163141, 2.954100) -- (-0.139373, 2.942290) -- (-0.112974, 2.927760) -- (-0.083077, 2.909631) -- (-0.048362, 2.886533) -- (-0.006666, 2.856187) -- (0.045923, 2.814416) -- (0.117406, 2.752544) -- (0.226911, 2.649324) -- (0.427309, 2.443634) -- (0.842704, 1.979916) -- (1.419672, 1.282825) -- (1.763303, 0.832602) -- (1.917100, 0.612380) -- (2.002967, 0.477470) -- (2.062988, 0.373682) -- (2.110587, 0.282721) -- (2.150681, 0.197566) -- (2.185348, 0.115128) -- (2.215579, 0.033859) -- (2.241896, -0.047121) -- (2.264591, -0.128400) -- (2.283827, -0.210416) -- (2.299681, -0.293539) -- (2.312163, -0.378091) -- (2.321224, -0.464353) -- (2.326768, -0.552558) -- (2.328652, -0.642872) -- cycle;

		\draw[thick] (-1.801879, -1.801879) -- (1.358992, 1.358992); % W^{1+i}(A)
		\draw[dashed] (-1.801879, -5.1) -- (-1.801879, -1.801879); % min C
		\node[circle, fill, inner sep=1.5pt] at (-1.801879, -1.801879) {};
		\draw[dashed] (1.358992, -5.1) -- (1.358992, 1.358992); % max C
		\node[circle, fill, inner sep=1.5pt] at (1.358992, 1.358992) {};

        \node[anchor=west] at (2.328652, -0.642872) {$\lambda_\textup{max}(B)$}; % max eig(B)
        \draw[dashed] (2.328652, -3.4) -- (2.328652, -0.642872);
        \node[circle, fill, inner sep=1.5pt] at (2.328652, -0.642872) {};
		\node[anchor=east, shift={(-0.05,0.3)}] at (-0.345918, 2.998405) {$\lambda_\textup{max}(B^\Gamma)$}; % max eig(B^Gamma)
		\draw[dashed,black!40!white] (2.998405, -1) -- (2.998405, -0.3);% fade the dashed line behind the lam_max label a bit
		\draw[dashed] (2.998405, -3.4) -- (2.998405, -1);
		\draw[dashed] (2.998405, -0.3) -- (2.998405, 2.998405) -- (-0.345918, 2.998405);
		\node[circle, fill, inner sep=1.5pt] at (-0.345918, 2.998405) {};
		%\node[circle, fill, inner sep=1.5pt] at (2.998405, 2.998405) {};
		
        \node[anchor=east,shift={(0,-0.3)}] at (-2.727246, -0.070536) {$\lambda_\textup{min}(B)$}; % min eig(B)
        \draw[dashed] (-2.727246, -3.4) -- (-2.727246, -0.070536);
        \node[circle, fill, inner sep=1.5pt] at (-2.727246, -0.070536) {};
		
        \node[anchor=south,shift={(-0.3,0.1)}] at (0.22, -2.924221) {$\lambda_\textup{min}(B^\Gamma)$}; % min eig(B^Gamma)
        \draw[dashed] (-2.924221, -3.4) -- (-2.924221, -2.924221) -- (0.22, -2.924221);
        \node[circle, fill, inner sep=1.5pt] at (0.22, -2.924221) {};

        % mu_min
        \node[anchor=north] at (-1.1, -3.6) {$\mu_\textup{min}(B)$};
        \draw[dashed] (-1.2, -1.2) -- (-1.2, -3.65);
        \node[circle, fill, inner sep=1.5pt] at (-1.2, -1.2) {};
        
        % mu_max
        \node[anchor=north] at (0.7, -3.6) {$\mu_\textup{max}(B)$};
        \draw[dashed] (0.6, 0.6) -- (0.6, -3.65);
        \node[circle, fill, inner sep=1.5pt] at (0.6, 0.6) {};
        
		% Labels
		\node at (-1.2, 0.7) {$W(B+iB^\Gamma)$};
		\draw[thick, decorate, decoration={brace,amplitude=10pt}] (-1.801879, -3.5) -- (-5.375,-3.5);
		\node[anchor=north] at (-3.843827,-3.85) {$\lambda_\textup{min}(pB+(1-p)B^\Gamma)$};
		\draw[thick, decorate, decoration={brace,amplitude=10pt}] (1.358992, -4.5) -- (-1.801879, -4.5);
		\node[anchor=north] at (-0.15,-4.85) {$W^{1+i}(B)$};
		\draw[thick, decorate, decoration={brace,amplitude=10pt}]  (5.375,-3.5) -- (1.358992, -3.5);
		\node[anchor=north] at (3.819075,-3.85) {$\lambda_\textup{max}(pB+(1-p)B^\Gamma)$};
		\node[anchor=north, shift={(0,-0.1)}] at (1.358992, -5.1) {$W^{1+i}_\textup{max}(B)$};
		\node[anchor=north, shift={(0,-0.1)}] at (-1.801879, -5.1) {$W^{1+i}_\textup{min}(B)$};
	\end{tikzpicture}
\caption{An illustration of the bounds described by Theorems~\ref{thm:mu_minmax_from_num_range} and~\ref{thm:W_diag_equals_p} when $B \in M_m \otimes M_n$ is symmetric.}\label{fig:main_theorem_schem}
\end{centering}
\end{figure}
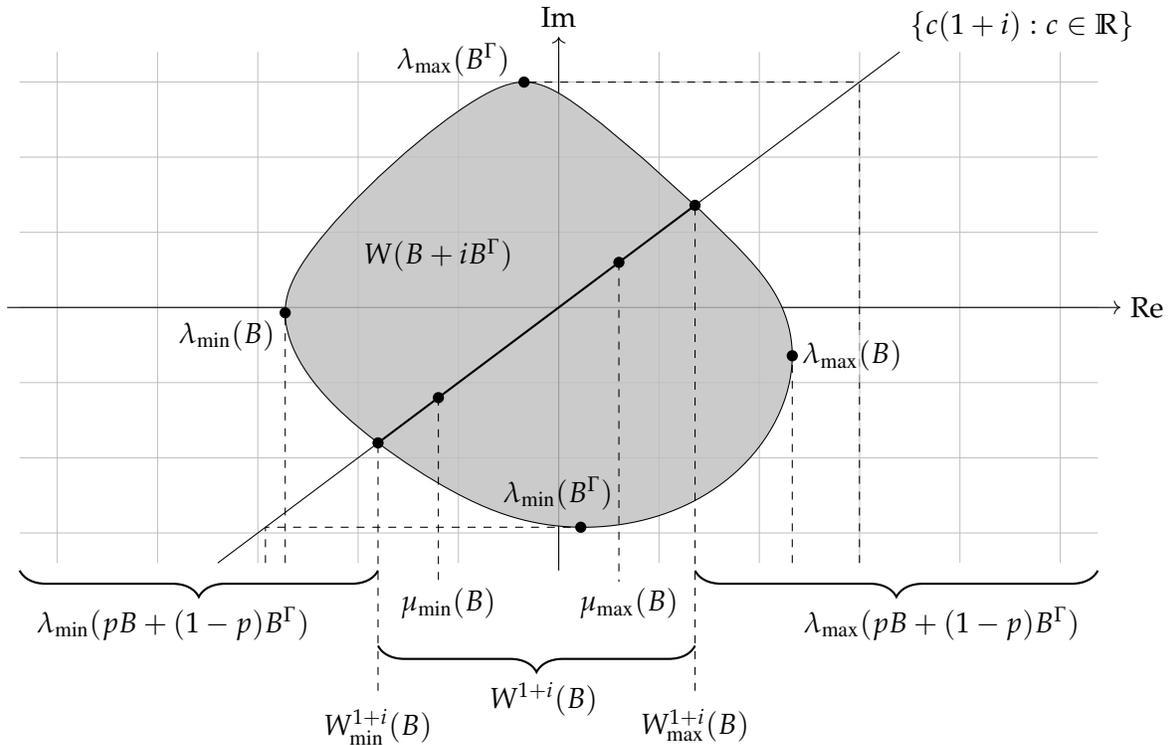

\begin{remark}\label{rem:why1i}
    There is nothing particularly special about the ``$1+i$'' in $W^{1+i}(B)$. We could have instead picked any two non-zero $y,z \in \R$ and replaced the defining Equation~\eqref{eq:def_W_diag} with
    \[
        W^{y+iz}(B) \defeq \big\{c \in \R : c(y+iz) \in W(yB + izB^\Gamma)\big\}.
    \]
    As long as $y,z \neq 0$, we have $W^{1+i}(B) = W^{y+iz}(B)$ (after all, $c \in W^{y+iz}(B)$ if and only if there exists a unit vector $\x$ for which $\x^*B\x = \x^*B^\Gamma\x$, which is also equivalent to $c \in W^{1+i}(B)$), so the bounds provided by Theorems~\ref{thm:mu_minmax_from_num_range} and~\ref{thm:W_diag_equals_p} would be unchanged if we replaced $W^{1+i}(B)$ by $W^{y+iz}(B)$. We picked $x = y = 1$ for simplicity and numerical stability.
    
    (On the other hand, if $z = 0$ then $W^{y}(B) = [\lambda_{\textup{min}}(B),\lambda_{\textup{max}}(B)]$ and if $y = 0$ then $W^{iz}(B) = [\lambda_{\textup{min}}(B^\Gamma),\lambda_{\textup{max}}(B^\Gamma)]$, so in these cases we would recover just the trivial bounds on $\mu_{\textup{min}}(B)$ and $\mu_{\textup{max}}(B)$ in Theorem~\ref{thm:mu_minmax_from_num_range}.)
\end{remark}

\subsection{Comparison with Known Bounds}\label{sec:comparison_with_known}

Besides providing a useful theoretical link between tensor optimization and the numerical range, Theorems~\ref{thm:mu_minmax_from_num_range} and~\ref{thm:W_diag_equals_p} are also useful practically since they are much easier to compute than other known bounds. In this section, we briefly compare our bounds with other techniques that are typically employed to bound the optimal value of tensor optimization problems.

Typical methods of bounding $\mu_\textup{min}(B)$ and $\mu_\textup{max}(B)$ make use of semidefinite programming (SDP) relaxations. Wwe do not provide a detailed introduction to semidefinite programming here---see \cite{BV04} or \cite[Section~3.C]{JohALA} for such an introduction---but rather we just note that semidefinite programs are a specific type of convex optimization problem that can be solved in polynomial time \cite{Lov03}. For example, it is straightforward to show that the optimal value of the following semidefinite program (in the symmetric matrix variable $X \in M_m \otimes M_n$) provides an upper bound on $\mu_\textup{max}(B)$:
\begin{align}\begin{split}\label{eq:sdp_max_WA}
    \textup{maximize:} & \ \tr(BX) \\
    \textup{subject to:} & \ X^\Gamma = X \\
    & \ \tr(X) = 1 \\
    & \ X \succeq O,
\end{split}\end{align}
where the constraint $X \succeq O$ means that $X$ is (symmetric) positive semidefinite. Indeed, for any unit vectors $\v$ and $\w$ in the optimization problem~\eqref{eq:main_minmax}, we can choose $X = \v\v^T \otimes \w\w^T$ as a feasible point in the SDP~\eqref{eq:sdp_max_WA} that attains the same value in the objective function.

In fact, semidefinite programs like this one typically provide a better bound on $\mu_\textup{max}(B)$ than Theorem~\ref{thm:mu_minmax_from_num_range} does. The advantage of Theorem~\ref{thm:mu_minmax_from_num_range} is that it requires significantly fewer computational resources to implement. Despite the fact that semidefinite programs can be solved in polynomial time, in practice they are quite slow and use an extraordinary amount of memory when the matrices involved have more than a few hundred rows and columns. By comparison, the numerical range of a matrix (and thus the bound provided by Theorem~\ref{thm:mu_minmax_from_num_range}) can easily be computed for any matrix that fits into a computer's memory---tens of thousands of rows and columns, or even millions of rows and columns if the matrix is sparse (i.e., has most entries equal to $0$).

\begin{example}\label{exam:random_large_full}
    If $m = n = 19$ then the semidefinite program~\eqref{eq:sdp_max_WA} takes about $45$ minutes to solve when applied to a random symmetric matrix $B \in M_m \otimes M_n$, whereas our method finishes in about $1$~second (we implemented our method in MATLAB and our code is available at~\cite{Pip22}).
    
    In fact, the semidefinite program~\eqref{eq:sdp_max_WA} is impractical to run already when $m = n = 20$, as our desktop computer with $16$Gb of RAM runs out of memory before completing the computation in this case via standard semidefinite programming solvers in either of MATLAB~\cite{cvx} or Julia~\cite{convexjl}. By contrast, our method continues working for significantly larger matrices, and takes about $17$~minutes to apply to a random symmetric matrix $B \in M_m \otimes M_n$ when $m = n = 100$.
\end{example}
    % MATLAB CODE for SDP:
    % m = 20; n = 20; B = randn(m*n,m*n); B = triu(B) + triu(B,1)';
    % tic;
    % cvx_begin sdp quiet
    % cvx_precision best;
    % variable rho(m*n,m*n) symmetric
    % maximize real(trace(B*rho))
    % subject to
    % rho >= 0;
    % PartialTranspose(rho) == rho;
    % trace(rho) == 1;
    % cvx_end
    % cvx_optval
    % toc

\begin{example}\label{exam:random_large_sparse}
    Since the numerical range of a matrix can be computed by just finding the maximum eigenvalue of some Hermitian matrices (see Appendix~\ref{sec:implementation} for details), and there are algorithms for numerically computing the maximum eigenvalue of large sparse matrices very quickly, Theorem~\ref{thm:mu_minmax_from_num_range} can be directly applied to large sparse matrices.
    
    For example, when $m = n = 500$ we generated a random symmetric sparse matrix $B \in M_m \otimes M_n$ (with entries generated independently according to a standard normal distribution) with approximately $1,000,000$ non-zero entries. For this matrix, our code computed both of the bounds from Theorem~\ref{thm:mu_minmax_from_num_range} in a total of $6$ minutes, showing that $\mu_{min}(B) \geq W^{1+i}_\textup{min}(B) \approx -1.9094$ and $\mu_{max}(B) \leq W^{1+i}_\textup{max}(B) \approx 2.3991$. These bounds are much better than the trivial eigenvalue bounds of $\mu_{min}(B) \geq \lambda_\textup{min}(B) \approx -3.2537$ and $\mu_{max}(B) \leq \lambda_\textup{max}(B) \approx 3.2538$ (see Figure~\ref{fig:large_C}).
    
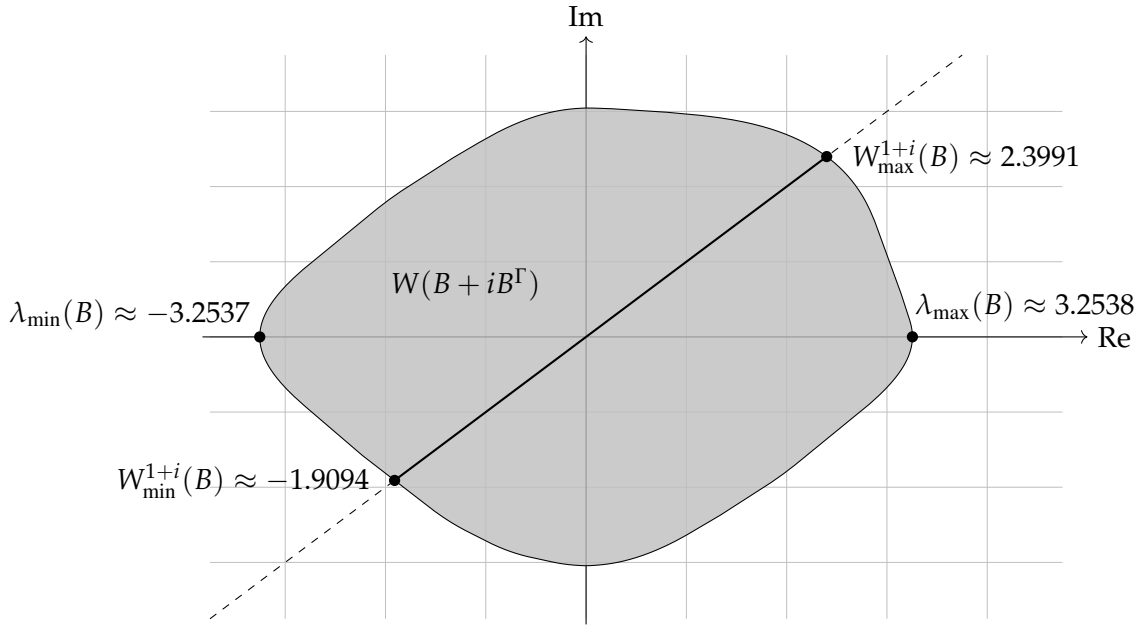
\begin{figure}[hbt]
\begin{centering}
	\begin{tikzpicture}[yscale=1,xscale=1.33333333] % Neat, I didn't know you could do that.
		% Vertical grid lines
		\foreach \x in {-3,...,4} {
			\draw[ultra thin, gray!50!white] (\x, -3.75) -- (\x, 3.75);
		}

		% Horizontal grid lines
		\foreach \y in {-3,...,3} {
			\draw[ultra thin, gray!50!white] (-3.75, \y) -- (4.75, \y);
		}

		% Axes
		\draw [->] (-3.825, 0) -- (5, 0) node[anchor=west] {$\mathrm{Re}$}; % X axis
		\draw [->] (0, -3.825) -- (0, 4) node[anchor=south] {$\mathrm{Im}$}; % Y axis

		\draw [dashed] (-3.75,-3.75) -- (3.75,3.75); % y=x

		% W(A)
		\draw [fill=gray!50, fill opacity=0.8]
(3.253810, 0.001177) -- (3.253070, -0.034171) -- (3.250840, -0.069588) -- (3.247080, -0.105344) -- (3.241708, -0.141699) -- (3.234604, -0.178917) -- (3.225599, -0.217282) -- (3.214470, -0.257113) -- (3.200923, -0.298777) -- (3.184573, -0.342705) -- (3.164919, -0.389422) -- (3.141296, -0.439581) -- (3.112812, -0.494014) -- (3.078246, -0.553824) -- (3.035868, -0.620528) -- (2.983114, -0.696335) -- (2.915860, -0.784808) -- (2.824565, -0.894902) -- (2.080696, -1.706196) -- (1.965678, -1.823671) -- (1.844541, -1.937408) -- (1.337607, -2.372392) -- (0.999657, -2.637784) -- (0.894060, -2.714657) -- (0.802373, -2.775633) -- (0.718959, -2.826196) -- (0.642207, -2.868426) -- (0.570999, -2.903802) -- (0.504454, -2.933453) -- (0.441839, -2.958263) -- (0.382535, -2.978930) -- (0.326007, -2.996009) -- (0.271788, -3.009940) -- (0.219458, -3.021071) -- (0.168626, -3.029675) -- (0.112262, -3.036675) -- (0.049082, -3.042539) -- (0.016614, -3.043900) -- (-0.015774, -3.043900) -- (-0.048621, -3.042519) -- (-0.122683, -3.036064) -- (-0.173244, -3.029682) -- (-0.224164, -3.021064) -- (-0.276583, -3.009914) -- (-0.331007, -2.995928) -- (-0.588134, -2.921296) -- (-0.643557, -2.901989) -- (-0.700005, -2.879632) -- (-0.757672, -2.853947) -- (-0.816769, -2.824599) -- (-0.877545, -2.791172) -- (-0.940324, -2.753134) -- (-1.203046, -2.575660) -- (-1.275603, -2.522929) -- (-1.349989, -2.463943) -- (-1.427489, -2.397007) -- (-1.510147, -2.319320) -- (-2.226688, -1.581371) -- (-2.805532, -0.919330) -- (-2.912227, -0.791169) -- (-2.980679, -0.701125) -- (-3.034173, -0.624255) -- (-3.077064, -0.556746) -- (-3.111993, -0.496308) -- (-3.140734, -0.441382) -- (-3.164539, -0.390839) -- (-3.184319, -0.343824) -- (-3.200752, -0.299672) -- (-3.214351, -0.257849) -- (-3.225509, -0.217916) -- (-3.234525, -0.179504) -- (-3.241627, -0.142298) -- (-3.246988, -0.106017) -- (-3.250733, -0.070412) -- (-3.252947, -0.035252) -- (-3.253679, -0.000316) -- (-3.252947, 0.034605) -- (-3.250736, 0.069728) -- (-3.246998, 0.105268) -- (-3.241651, 0.141454) -- (-3.234573, 0.178534) -- (-3.225595, 0.216782) -- (-3.214495, 0.256510) -- (-3.200979, 0.298077) -- (-3.184664, 0.341913) -- (-3.165049, 0.388537) -- (-3.141473, 0.438597) -- (-3.113047, 0.492918) -- (-3.078559, 0.552594) -- (-3.036293, 0.619122) -- (-2.983715, 0.694677) -- (-2.916775, 0.782738) -- (-2.825806, 0.892430) -- (-2.079557, 1.708734) -- (-1.966032, 1.824645) -- (-1.844941, 1.938337) -- (-1.329150, 2.380307) -- (-1.066773, 2.586359) -- (-0.891512, 2.715923) -- (-0.800373, 2.776536) -- (-0.717410, 2.826824) -- (-0.640992, 2.868871) -- (-0.570017, 2.904131) -- (-0.503619, 2.933716) -- (-0.441075, 2.958497) -- (-0.381763, 2.979167) -- (-0.325133, 2.996276) -- (-0.270674, 3.010268) -- (-0.217849, 3.021503) -- (-0.165720, 3.030321) -- (-0.104812, 3.037919) -- (-0.045159, 3.043124) -- (-0.003489, 3.044847) -- (0.062811, 3.044673) -- (0.260927, 3.035448) -- (0.801321, 2.989553) -- (1.144291, 2.947185) -- (1.328101, 2.916371) -- (1.461922, 2.888033) -- (1.573364, 2.859476) -- (1.671526, 2.829875) -- (1.760380, 2.798960) -- (1.842057, 2.766642) -- (1.917885, 2.732898) -- (1.988780, 2.697718) -- (2.055423, 2.661092) -- (2.118343, 2.622995) -- (2.177968, 2.583388) -- (2.234648, 2.542213) -- (2.288678, 2.499394) -- (2.340305, 2.454833) -- (2.389741, 2.408410) -- (2.437165, 2.359982) -- (2.482727, 2.309377) -- (2.526558, 2.256391) -- (2.568764, 2.200779) -- (2.609435, 2.142251) -- (2.648647, 2.080448) -- (2.686472, 2.014914) -- (2.722993, 1.945029) -- (2.758356, 1.869837) -- (2.792956, 1.787457) -- (2.828363, 1.692057) -- (2.877185, 1.540281) -- (3.141286, 0.572913) -- (3.228687, 0.214480) -- (3.240522, 0.153215) -- (3.246779, 0.111019) -- (3.250774, 0.073094) -- (3.253063, 0.036779) -- (3.253810, 0.001177) -- cycle;

		\draw [thick] (-1.9094,-1.9094) -- (2.3989,2.3989);
		\node[circle, fill, inner sep=1.5pt] at (-1.9094,-1.9094) {};
		\node[circle, fill, inner sep=1.5pt] at (2.399135,2.399135) {};
		\node[circle, fill, inner sep=1.5pt] at (-3.253679, -0.000316) {};
		\node[circle, fill, inner sep=1.5pt] at (3.253810, 0.001177) {};

		% Labels
		\node at (-1.2, 0.7) {$W(B+iB^\Gamma)$};
		\node[anchor=east, shift={(-0.2,0)}] at (-1.9094,-1.9094) {$W^{1+i}_\textup{min}(B) \approx -1.9094$};
		\node[anchor=west, shift={(0.2,0)}] at (2.399135,2.399135) {$W^{1+i}_\textup{max}(B) \approx 2.3991$};
		\node[anchor=east, shift={(0,0.3)}] at (-3.253679, -0.000316) {$\lambda_\textup{min}(B) \approx -3.2537$};
		\node[anchor=west, shift={(-0.1,0.4)}] at (3.253810, 0.001177) {$\lambda_\textup{max}(B) \approx 3.2538$};
	\end{tikzpicture}
\caption{The numerical range of a random sparse symmetric matrix $B \in M_{500} \otimes M_{500}$.}\label{fig:large_C}
\end{centering}
\end{figure}
\end{example}
% MATLAB code:
%n = 500;
%D = sprand(n^2,n^2,1/(n^2/4));
%C = triu(D) + triu(D,1)';
%nnz(C)
%A = C + 1i*PartialTranspose(C);
%get_mu(A,'minimum')
%eigs(C,1,'smallestreal')

It is perhaps worth noting that Theorems~\ref{thm:mu_minmax_from_num_range} and~\ref{thm:W_diag_equals_p} can themselves be interpreted as semidefinite programming relaxations of $\mu_{\textup{max}}(B)$. For example, $W^{1+i}_\textup{max}(B)$ is the optimal value of the following primal/dual pair of semidefinite programs in the variables $c,p \in \R$ and the symmetric matrix variable $X \in M_m \otimes M_n$:
\begin{align}\begin{split}\label{eq:exact_theorem_SDP}
	\begin{matrix}
		\begin{tabular}{r l c r l}
		\multicolumn{2}{c}{\underline{\textup{Primal problem}}} & \quad \quad & \multicolumn{2}{c}{\underline{\textup{Dual problem}}} \\
		\textup{maximize:} & $\tr(BX)$ & \quad \quad & \textup{minimize:} & $c$ \\
		\textup{subject to:} & $\tr(BX) = \tr(BX^\Gamma)$ & \quad \quad \quad & \textup{subject to:} & $pB + (1-p)B^\Gamma \preceq cI$ \\
		\ & $\tr(X) = 1$ & \quad \quad & \ & \\
		\ & $X \succeq O$ & \quad \quad & \ &	
		\end{tabular}
	\end{matrix}
\end{split}\end{align}
However, it is more computationally efficient to compute $W^{1+i}_\textup{max}(B)$ directly via the numerical range (as described in Appendix~\ref{sec:implementation}), rather than via semidefinite program solvers.

\section{Some Applications}\label{sec:applications}

To illustrate the effectiveness of our results, we now apply Theorems~\ref{thm:mu_minmax_from_num_range} and~\ref{thm:W_diag_equals_p} to some specific problems that arise as special cases of the optimization problem~\eqref{eq:main_minmax}.

\subsection{Application to Rank-One-Avoiding Matrix Subspaces}\label{sec:matrix_subspaces}

We say that a subspace of matrices $\S \subseteq M_{m,n}$ is \emph{rank-one-avoiding} if $\rank(X) \neq 1$ for all $X \in \S$. Although there are a few results on bounding which subspaces can be rank-one-avoiding, for instance those of dimension greater than $mn-\textup{max}\{m,n\}$ cannot be rank-one avoiding~\cite{DGMS10},
showing that a given matrix subspace $\S$ is rank-one-avoiding seems to be hard (indeed, determining the minimal rank of a non-zero matrix in a subspace is NP-hard \cite{BFS99}). However, our method can be adapted to solve this problem quickly for a large number of matrix subspaces.

%In fact, our method can be used to provide a quantitative answer to this problem.
Consider the following quantity, which can be thought of as a measure of how close $\S$ is to being rank-one-avoiding:
\begin{align}\label{eq:d_S_defn}
    d(\S) \defeq \max_{Y \in \S}\Big\{ \|Y\| : \|Y\|_{\textup{F}} \leq 1 \Big\},
\end{align}
where $\|Y\|$ is the operator norm (i.e., the largest singular value of $Y$) and $\|Y\|_{\textup{F}}$ is the Frobenius norm (i.e., the $2$-norm of the vector of singular values of $Y$). We note that this maximum really is a maximum (not a supremum) by compactness of the set of $Y \in \S$ with $\|Y\|_{\textup{F}} \leq 1$. It is straightforward to show that $\|Y\| \leq \|Y\|_{\textup{F}}$, with equality if and only if $\rank(Y) = 1$, which immediately implies that $d(\S) \leq 1$ with equality if and only if there exists $Y \in \S$ with $\rank(Y) = 1$. In other words, $\S$ is rank-one-avoiding if and only if $d(\S) < 1$.

Before we can state the main result of this section, we recall that the (column-by-column) vectorization of a matrix is the operation $\mathrm{vec} : M_{m,n} \to \R^n \otimes \R^m$ defined by
\begin{align*}
    \mathrm{vec}\left(\left[\begin{array}{c|c|c|c}
    \mathbf{v_1} & \mathbf{v_2} & \cdots & \mathbf{v_n}\end{array}\right]\right) \defeq
\left[\begin{array}{c}
    \mathbf{v_1} \\ \hline
    \mathbf{v_2} \\ \hline
    \vdots \\ \hline
    \mathbf{v_n}\end{array}\right].
\end{align*}
For a subspace $\S \subseteq M_{m,n}$, we similarly define $\mathrm{vec}(\S) \defeq \{\mathrm{vec}(Y) : Y \in \S\}$.

Our main result of this section is then an easy-to-compute upper bound on $d(\S)$ in terms of $\mathrm{vec}(\S)$, and thus a way to show that a subspace $\S$ is rank-one-avoiding:

\begin{theorem}\label{thm:rank_one_avoiding_by_num_range}
    Let $\S \subseteq M_{m,n}$ be a subspace and let $P_{\S}$ be the orthogonal projection onto $\mathrm{vec}(\S)$. Then
    \[
        d(\S)^2 \leq W_{\textup{max}}^{1+i}(P_{\S}).
    \]
    In particular, if $W_{\textup{max}}^{1+i}(P_{\S}) < 1$ then $\S$ is rank-one-avoiding.
\end{theorem}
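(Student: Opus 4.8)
The plan is to convert the definition of $d(\S)$ into a quadratic form in a unit product vector and then apply Theorem~\ref{thm:mu_minmax_from_num_range}. First I would dispose of the trivial case $\S = \{0\}$ (there $d(\S) = 0$ and $P_\S = O$, so both sides vanish) and otherwise fix a maximizer $Y \in \S$ of the problem in~\eqref{eq:d_S_defn}; since the constraint $\|Y\|_{\textup{F}} \leq 1$ must be active at the optimum, we may assume $\|Y\|_{\textup{F}} = 1$, so $\mathbf{y} \defeq \mathrm{vec}(Y)$ is a unit vector in $\mathrm{vec}(\S)$ and hence $P_\S \mathbf{y} = \mathbf{y}$.

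Next I would invoke the variational formula for the operator norm: $d(\S) = \|Y\| = \max\{\u^T Y \v : \u \in \R^m,\ \v \in \R^n,\ \|\u\| = \|\v\| = 1\}$ (the signed maximum equals the unsigned one since $\u$ may be negated). Picking optimal unit vectors $\u,\v$, the key identity is
\[
    \u^T Y \v \;=\; \langle Y, \u\v^T\rangle \;=\; \langle \mathrm{vec}(Y), \mathrm{vec}(\u\v^T)\rangle \;=\; (\v \otimes \u)^T \mathbf{y},
\]
using that, under the column-by-column convention, $\mathrm{vec}(\u\v^T) = \v \otimes \u \in \R^n \otimes \R^m$. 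Thus $d(\S) = (\v \otimes \u)^T \mathbf{y}$.

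Then I would square this after inserting $P_\S$. Since $P_\S$ is a symmetric idempotent with $P_\S\mathbf{y} = \mathbf{y}$, Cauchy--Schwarz gives
\[
    d(\S) = (\v\otimes\u)^T P_\S \mathbf{y} = \langle P_\S(\v\otimes\u), \mathbf{y}\rangle \leq \|P_\S(\v\otimes\u)\|\,\|\mathbf{y}\| = \|P_\S(\v\otimes\u)\|,
\]
so $d(\S)^2 \leq \|P_\S(\v\otimes\u)\|^2 = (\v\otimes\u)^T P_\S(\v\otimes\u)$. Because $\v \otimes \u$ is a unit product vector in $\R^n \otimes \R^m$, the right-hand side is at most $\mu_{\textup{max}}(P_\S)$, and Theorem~\ref{thm:mu_minmax_from_num_range} yields $\mu_{\textup{max}}(P_\S) \leq W^{1+i}_{\textup{max}}(P_\S)$; chaining these gives $d(\S)^2 \leq W^{1+i}_{\textup{max}}(P_\S)$. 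The final clause is then immediate, since $W^{1+i}_{\textup{max}}(P_\S) < 1$ forces $d(\S) < 1$, which by the discussion preceding the theorem means $\S$ is rank-one-avoiding.

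The only delicate points here are bookkeeping ones: matching the order of the tensor factors to the $M_{m,n} \to \R^n \otimes \R^m$ convention for $\mathrm{vec}$, and confirming that the unsigned operator-norm variational characterization is the one that applies. There is no genuinely hard step; the real content is simply the observation that orthogonally projecting the rank-one test vector $\v \otimes \u$ onto $\mathrm{vec}(\S)$ cannot increase its overlap with the unit vector $\mathbf{y} \in \mathrm{vec}(\S)$, which repackages $d(\S)^2$ as a quadratic form of $P_\S$ evaluated on a product vector.
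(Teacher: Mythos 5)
Your argument is correct and follows essentially the same route as the paper: convert the operator-norm maximization over $\S$ into a quadratic form in a unit product vector via $\mathrm{vec}(\u\v^T)=\v\otimes\u$, absorb the subspace constraint into the projection $P_\S$, and then invoke Theorem~\ref{thm:mu_minmax_from_num_range}. The only cosmetic difference is that you prove the one-sided bound $d(\S)^2 \leq \mu_{\textup{max}}(P_\S)$ via Cauchy--Schwarz on a fixed optimal $Y$, whereas the paper runs the same Cauchy--Schwarz step inside a chain of equalities to establish $d(\S)^2 = \mu_{\textup{max}}(P_\S)$ exactly; for the stated theorem your weaker inequality suffices.
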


\begin{proof}
    By Theorem~\ref{thm:mu_minmax_from_num_range}, we know that $\mu_{\textup{max}}(P_{\S}) \leq W_{\textup{max}}^{1+i}(P_{\S})$, so it suffices to show that $\mu_{\textup{max}}(P_{\S}) = d(\S)^2$. To this end, we compute
    \begin{align*}
        d(\S)^2 & = \max_{Y \in \S}\Big\{ \|Y\|^2 : \|Y\|_{\textup{F}} \leq 1 \Big\} \\
        & = \max_{\substack{Y \in \S\\\v \in \R^m,\w \in \R^n}}\Big\{ |\w^TY\v|^2 : \|Y\|_{\textup{F}}, \|\v\|, \|\w\| \leq 1 \Big\} \\
        & = \max_{\substack{Y \in \S\\\v \in \R^m,\w \in \R^n}}\Big\{ |\mathrm{vec}(Y)^T(\v \otimes \w)|^2 : \|\mathrm{vec}(Y)\|, \|\v\|, \|\w\| \leq 1 \Big\} \\
        & = \max_{\v \in \R^m,\w \in \R^n}\Big\{ (\v \otimes \w)^TP_{\S}(\v \otimes \w) : \|\v\|, \|\w\| \leq 1 \Big\} \\
        & = \mu_{\textup{max}}(P_{\S}),
    \end{align*}
    where the third equality comes from the facts that $\|Y\|_{\textup{F}} = \|\mathrm{vec}(Y)\|$ and $\w^TY\v = \mathrm{vec}(Y)^T(\v \otimes \w)$.
\end{proof}

\begin{example}\label{exam:2dim_real_subspace}
    Consider the subspace $\S \subset M_2$ of matrices of the form
    \begin{align}\label{eq:22subspace}
        \begin{bmatrix}
            c & -d \\ d & c
        \end{bmatrix},
    \end{align}
    where $c,d \in \R$. This subspace is rank-one-avoiding since for all non-zero matrices in $\S$, the determinant $c^2+d^2$ is non-zero, so each matrix is invertible and thus has rank $2$ (as a side note, $\S$ is a largest possible rank-one-avoiding subspace of $M_2$, since its dimension ($2$) saturates the upper bound of $mn-\textup{max}(m,n)=2$ from~\cite{DGMS10}).
    
    To see that $\S$ is rank-one-avoiding via our machinery, we note that direct computation shows that
    \[
        P_{\S} = \frac{1}{2}\begin{bmatrix}
            1 & 0 & 0 & 1 \\
            0 & 1 & -1 & 0 \\
            0 & -1 & 1 & 0 \\
            1 & 0 & 0 & 1
        \end{bmatrix}
    \]
    and that $W_{\textup{max}}^{1+i}(P_{\S}) = 1/2$. It then follows from Theorem~\ref{thm:rank_one_avoiding_by_num_range} that $\S$ is rank-one-avoiding.%In fact, $P_\S + iP_\S^\Gamma$ is normal, so the numerical range $W(P_\S + iP_\S^\Gamma)$ is just a line segment and $W_{\textup{max}}^{1+i}(P_{\S}) = W_{\textup{min}}^{1+i}(P_{\S}) = 1/2$ in this case. This makes sense since the singular values of any matrix in this subspace are equal to each other, so the closest rank-$1$ matrix is basically half of it, obtained by taking one singular value or the other.
\end{example}

\begin{remark}
    The previous example demonstrates that our results really are specific to the case of real matrices; the subspace $\S$ from Example~\ref{exam:2dim_real_subspace} is rank-one avoiding in the real case, but not in the complex case (e.g., if we choose $c = 1$ and $d = i$ then the matrix~\eqref{eq:22subspace} has rank~$1$).\footnote{In fact, every $2$-dimensional subspace of $M_2(\C)$ contains a rank-$1$ matrix \cite{CMW08}.} Our method is thus independent of known methods for the showing that a subspace of complex matrices is rank-one-avoiding (see \cite{DRA21,JLV22}, for example).
\end{remark}
%In fact, for complex spans of matrices, one only has to consider subspaces of dimension up to $(n-1)(m-1)$, as subspaces of dimension greater than this bound are guaranteed to contain a rank-one matrix~\cite{CMW08}.
%However, that is not the case when considering real matrices. All matrix subspaces (real or complex) are guaranteed to have a rank-one matrix if they have dimension greater than $mn-\textup{max}(m,n)$~\cite{DGMS10}, but the earlier bound of $(n-1)(m-1)$ does not always hold for real matrices.

Since we can easily compute $W^{1+i}_\textup{max}(P_\S)$ for any subspace $\S$, we can get a rough idea of the efficacy of Theorem~\ref{thm:rank_one_avoiding_by_num_range} by generating orthogonal projection matrices for random subspaces of a given dimension, and computing $W^{1+i}_\textup{max}(P_\S)$ for each. Figure~\ref{fig:rnk-1-avoid-prob} illustrates the probability of Theorem~\ref{thm:rank_one_avoiding_by_num_range} detecting that a subspace of the given dimension is rank-one-avoiding, based on $10,000$ randomly-generated (according to Haar measure on the Grassmannian) subspaces of each dimension.

%\pnote{Shoot, it doesn't look like you can have $m \geq n$ WLOG. The $W^{1+i}$ bound on $A = P+1i$*PartialTranspose($P$, $2$, [$m$,$n$]) can be different than on $B = P+1i$*PartialTranspose($P$, $2$, [$n$,$m$]) when $m \neq n$... orthonormal basis in comments in TeX}
%    -0.2972   -0.3543   -0.1016    0.4532   -0.0940
%    -0.1425    0.2031    0.3564    0.0186    0.4310
%    0.1003   -0.0543    0.1290    0.1974   -0.3709
%    0.2400    0.1846    0.2042    0.4859   -0.0043
%    0.0450    0.2839   -0.7315   -0.0145    0.0242
%    0.0242    0.2528   -0.0528    0.3522    0.2980
%    0.1084   -0.4204    0.0103   -0.2758    0.1376
%    0.7677   -0.3039   -0.2164    0.2361    0.1342
%    0.0963   -0.1587    0.0002   -0.3255   -0.4147
%    0.0407    0.5122   -0.1968   -0.0663   -0.3184
%    0.4565    0.3067    0.3748   -0.2554   -0.0328
%    0.0259   -0.0317   -0.2007   -0.2959    0.5167
%\end{remark}

% \begin{table}[htb]
% \centering
% \begin{tabular}{c|ccccc}
% \diagbox{$m$}{$n$} & $2$ & $3$ & $4$ & $5$  & $6$  \\ \hline
% $2$                  & $2$     & $\cdot$ & $\cdot$ & $\cdot$ & $\cdot$  \\
% $3$                  & $2$     & $5$     & $\cdot$ & $\cdot$ & $\cdot$  \\
% $4$                  & $4$     & $6$     & $8$     & $\cdot$ & $\cdot$ \\
% $5$                  & $4$     & $8$     & $10$    & $12$    & $\cdot$ \\
% $6$                  & $5$     & $9$     & $11$    & $14$    & $16$
% \end{tabular}
% \caption{Dimension of the largest rank-one-avoiding subspace determined via $W^{1+i}_\textup{max}(P_\S)$ by dimension of the ambient space $M_{m,n}$.}\label{tbl:rnk-1-avoid-exist}
% \end{table}

\begin{figure}[htb]
\centering
\begin{subfigure}[b]{0.53\textwidth}
    \centering
    \includegraphics[width=\textwidth]{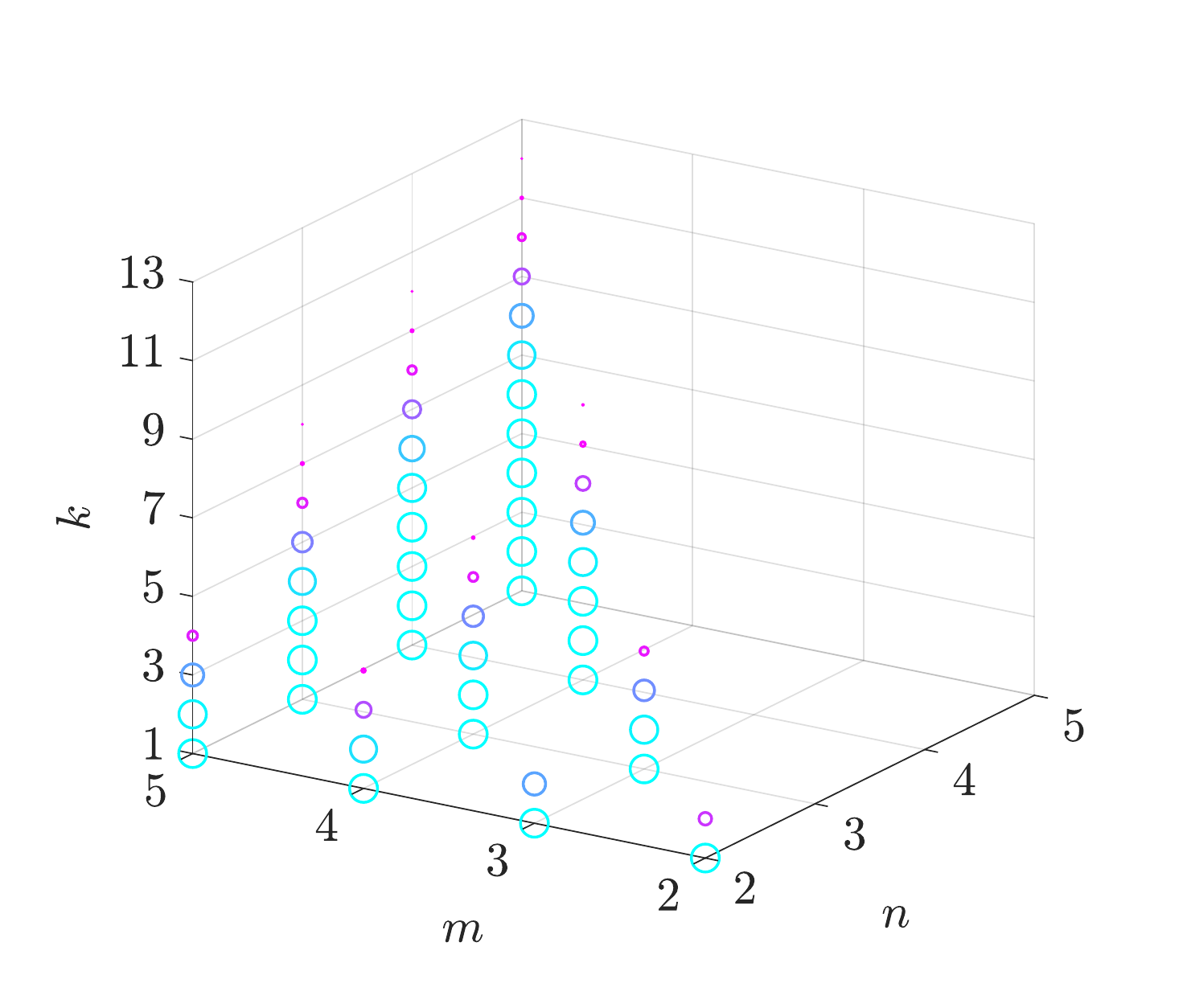}
    \caption{A visualization of the probabilities when $m \geq n$ (the picture is symmetric when $m \leq n$), where the largest circles shown mean probability $1$ and no circle means probability $0$.}
\end{subfigure} \hfill \begin{subfigure}[b]{0.43\textwidth}
\centering
    \begin{tabular}{c|cccc}
    \diagbox{$k$}{$n$} & $2$ & $3$ & $4$ & $5$ \\ \hline
    $1$  & $1.00$     & $1.00$     & $1.00$     & $1.00$     \\
    $2$  & $0.21$  & $0.96$  & $1.00$     & $1.00$     \\
    $3$  & $0.00$     & $0.56$  & $1.00$     & $1.00$     \\
    $4$  & $\cdot$     & $0.09$  & $0.95$  & $1.00$     \\
    $5$  & $\cdot$ & $0.00$     & $0.69$  & $1.00$     \\
    $6$  & $\cdot$ & $\cdot$     & $0.25$  & $0.99$  \\
    $7$  & $\cdot$ & $\cdot$     & $0.03$  & $0.92$  \\
    $8$  & $\cdot$ & $\cdot$     & $0.00$     & $0.68$  \\
    $9$  & $\cdot$ & $\cdot$     & $\cdot$     & $0.31$  \\
    $10$ & $\cdot$ & $\cdot$ & $\cdot$     & $0.07$  \\
    $11$ & $\cdot$ & $\cdot$ & $\cdot$     & $0.00$     \\
    $12$ & $\cdot$ & $\cdot$ & $\cdot$     & $\cdot$     \\
    \end{tabular}
    \caption{Explicit probabilities, rounded to two decimal places, when $m = n$.}
\end{subfigure}
\caption{The approximate probability of Theorem~\ref{thm:rank_one_avoiding_by_num_range} detecting that a $k$-dimensional subspace of $M_{m,n}$ is rank-one-avoiding, based on $10,000$ randomly-generated (according to Haar measure on the Grassmannian) subspaces of each dimension.}\label{fig:rnk-1-avoid-prob}
\end{figure}

%Relevant references:
%https://www.sciencedirect.com/science/article/pii/0021869370900992
%https://www.cambridge.org/core/journals/proceedings-of-the-royal-society-of-edinburgh-section-a-mathematics/article/abs/linear-spaces-of-real-matrices-of-large-rank/8E57C4BE917D9EEFA1C42B2C126E0348

\subsection{Application to Positive Linear Maps and Biquadratic Forms}\label{sec:pos_lin_map}

Recall that we use $M_n^\textup{S}$ to denote the set of (real) symmetric $n \times n$ matrices. We similarly use $M_n^\textup{A}$ to denote the set of (real) skew-symmetric $n \times n$ matrices (i.e., matrices $Y$ satisfying $Y^T = -Y$). We say that a linear map $\Phi : M_m \rightarrow M_n$ is \emph{transpose-preserving} if $\Phi(X^T) = \Phi(X)^T$ for all $X \in M_m$ (or equivalently, if $\Phi(M_m^\textup{S}) \subseteq M_n^\textup{S}$ and $\Phi(M_m^\textup{A}) \subseteq M_n^\textup{A}$).\footnote{These maps were called \emph{Hermitian} in \cite{CDPR22}; we instead use the terminology of \cite{JohALA}.} If $\Phi$ is transpose-preserving then we say that it is \emph{positive} if $\Phi(X) \in M_n^\textup{S}$ is positive semidefinite whenever $X \in M_m^\textup{S}$ is positive semidefinite \cite{Pau03,CDPR22}. It is straightforward to show that if there exist matrices $\{A_j\} \subseteq M_{n,m}$ and a linear map $\Psi : M_m \rightarrow M_n$ such that $\Psi(X) = O$ whenever $X \in M_m^\textup{S}$ and
\begin{align}\label{eq:decomp_real}
    \Phi(X) = \sum_j A_j X A_j^T + \Psi(X) \quad \text{for all} \quad X \in M_m,
\end{align}
then $\Phi$ is positive (in fact, maps of the form $X \mapsto \sum_j A_j X A_j^T$ are called \emph{completely positive}).

Conversely, if $\min\{m,n\} \leq 2$ then it is known \cite{Cal73} that if $\Phi$ is positive then it can be written in the form~\eqref{eq:decomp_real}. However, if $\min\{m,n\} \geq 3$ then there are exceptions (i.e., positive linear maps that cannot be written in the form~\eqref{eq:decomp_real}). For example, the \emph{Choi map} $\Phi : M_3 \rightarrow M_3$ defined by
\begin{align}\label{eq:choi_map_3x3}
    \Phi(X) := \begin{bmatrix}
        x_{1,1} + x_{3,3} & -x_{1,2} & -x_{1,3} \\
        -x_{2,1} & x_{1,1} + x_{2,2} & -x_{2,3} \\
        -x_{3,1} & -x_{3,2} & x_{2,2} + x_{3,3}
    \end{bmatrix}
\end{align}
is positive but cannot be written in the form~\eqref{eq:decomp_real} \cite{Cho75}.

The \emph{Choi matrix} \cite{Cho75b} of a linear map $\Phi : M_m \rightarrow M_n$ is the matrix $C_\Phi \in M_m \otimes M_n$ defined by
\[
    C_{\Phi} \defeq \sum_{i,j=1}^m E_{i,j} \otimes \Phi(E_{i,j}),
\]
where $E_{i,j}$ is the matrix with a $1$ in its $(i,j)$-entry and zeroes elsewhere. It is straightforward to show that $\Phi$ is transpose-preserving if and only if $C_\Phi$ is symmetric. Furthermore, a transpose-preserving map $\Phi$ can be written in the form~\eqref{eq:decomp_real} if and only if $C_{\Phi} = A + B$ for some symmetric $A, B \in M_m \otimes M_n$ with $A$ positive semidefinite and $B$ satisfying $B = -B^\Gamma$ ($A$ and $B$ are the Choi matrices of the maps $X \mapsto \sum_j A_jXA_j^T$ and $\Psi$, respectively, and it is well-known \cite{Cho75b} that a map is completely positive if and only if its Choi matrix is positive semidefinite).

In general, determining positivity of a linear map is hard, since it is equivalent to solving the optimization problem~\eqref{eq:main_minmax}:

\begin{theorem}\label{thm:positive_map}
    Let $\Phi : M_m \rightarrow M_n$ be a transpose-preserving linear map. Then
    \begin{align}\label{eq:pos_map_as_min}
        \min\big\{ \lambda_{\textup{min}}\big(\Phi(X)\big) : X \succeq O, \tr(X) = 1 \big\} = \mu_{\textup{min}}(C_{\Phi}).
    \end{align}
    In particular, $\Phi$ is positive if and only if $\mu_{\textup{min}}(C_{\Phi}) \geq 0$.
\end{theorem}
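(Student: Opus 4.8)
The plan is to rewrite both sides of~\eqref{eq:pos_map_as_min} as minimizations of one and the same bilinear form $\w^T\Phi(\v\v^T)\w$ over unit vectors, using the Rayleigh quotient on the left and a direct expansion of the Choi matrix on the right, and then to bridge the gap between them with a spectral-decomposition argument.

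First, since $\Phi$ is transpose-preserving, $\Phi(X)$ is symmetric whenever $X$ is symmetric, so $\lambda_{\textup{min}}(\Phi(X))$ is well-defined and equals $\min_{\|\w\|=1}\w^T\Phi(X)\w$. Hence the left-hand side of~\eqref{eq:pos_map_as_min} equals $\min\{\w^T\Phi(X)\w : X\succeq O,\ \tr(X)=1,\ \|\w\|=1\}$. On the other side, expanding $C_{\Phi}=\sum_{i,j=1}^m E_{i,j}\otimes\Phi(E_{i,j})$ gives, for any $\v\in\R^m$ and $\w\in\R^n$,
\[
    (\v\otimes\w)^T C_{\Phi}(\v\otimes\w)=\sum_{i,j=1}^m v_iv_j\,\w^T\Phi(E_{i,j})\w=\w^T\Phi(\v\v^T)\w,
\]
using linearity of $\Phi$ together with $\v\v^T=\sum_{i,j}v_iv_jE_{i,j}$. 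Therefore $\mu_{\textup{min}}(C_{\Phi})=\min\{\w^T\Phi(\v\v^T)\w:\|\v\|=\|\w\|=1\}$.

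Now both inequalities fall out. For $\text{LHS}\leq\mu_{\textup{min}}(C_{\Phi})$, note that every rank-one density matrix $X=\v\v^T$ with $\|\v\|=1$ is feasible in the left-hand minimization, so restricting to such $X$ can only increase the optimal value. For the reverse inequality, take any $X\succeq O$ with $\tr(X)=1$ and write its spectral decomposition $X=\sum_k p_k\v_k\v_k^T$ with $p_k\geq 0$, $\sum_k p_k=1$, and $\|\v_k\|=1$; then by linearity $\w^T\Phi(X)\w=\sum_k p_k\,\w^T\Phi(\v_k\v_k^T)\w\geq\sum_k p_k\,\mu_{\textup{min}}(C_{\Phi})=\mu_{\textup{min}}(C_{\Phi})$ for every unit $\w$, where the middle step uses the displayed formula and the definition of $\mu_{\textup{min}}$. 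Minimizing over $\w$ and then over $X$ yields $\text{LHS}\geq\mu_{\textup{min}}(C_{\Phi})$ (the minimum on the left is genuinely attained, by compactness of $\{X\succeq O:\tr(X)=1\}$ and continuity of $X\mapsto\lambda_{\textup{min}}(\Phi(X))$). Finally, $\Phi$ is positive precisely when $\Phi(X)\succeq O$ for all $X\succeq O$, which by homogeneity ($\Phi(cX)=c\Phi(X)$) is equivalent to the same statement for all trace-one $X\succeq O$, i.e., to $\lambda_{\textup{min}}(\Phi(X))\geq 0$ on that set, i.e., to $\mu_{\textup{min}}(C_{\Phi})\geq 0$. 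The only mildly delicate points are the bookkeeping that makes $\lambda_{\textup{min}}(\Phi(X))$ meaningful (transpose-preservation) and the combination of the spectral decomposition with linearity in the $\geq$ direction; neither is a serious obstacle.
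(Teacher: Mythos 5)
Your proof is correct and follows essentially the same route as the paper's: both reduce the minimization over trace-one PSD matrices $X$ to rank-one $X=\v\v^T$ via the spectral decomposition and convexity/linearity, then use the Rayleigh quotient and the Choi-matrix identity $(\v\otimes\w)^TC_{\Phi}(\v\otimes\w)=\w^T\Phi(\v\v^T)\w$ to finish. You merely spell out the two inequalities separately where the paper compresses the reduction to rank one into a single sentence, so the content is the same.
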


\begin{proof}
    By convexity and the spectral decomposition, $\lambda_{\textup{min}}\big(\Phi(X)\big)$ is minimized as in Equation~\eqref{eq:pos_map_as_min} when $X = \x\x^T$ for some $\x \in \R^m$ with $\|\x\| = 1$. It follows that
    \begin{align*}
        \min\big\{ \lambda_{\textup{min}}\big(\Phi(X)\big) : X \succeq O, \tr(X) = 1 \big\} & = \min_{\x \in \R^m}\big\{ \lambda_{\textup{min}}\big(\Phi(\x\x^T)\big) : \|\x\| = 1 \big\} \\
         & = \min_{\x \in \R^m, \y \in \R^n}\big\{ \y^T\Phi(\x\x^T)\y : \|\x\| = \|\y\| = 1 \big\} \\
         & = \min_{\x \in \R^m, \y \in \R^n}\big\{ (\x \otimes \y)^T C_{\Phi}(\x \otimes \y) : \|\x\| = \|\y\| = 1 \big\} \\
         & = \mu_{\textup{min}}(C_{\Phi}),
    \end{align*}
    as claimed.
\end{proof}

Determining whether or not $\Phi$ can be written in the form~\eqref{eq:decomp_real} is much simpler than determining positivity, as it can be decided by the following semidefinite program in the variables $c \in \R$ and $X,Y \in M_m \otimes M_n$:
\begin{align}\begin{split}\label{eq:sdp_decomp_real}
    \textup{minimize:} & \ c \\
    \textup{subject to:} & \ C_{\Phi} + cI = X + Y \\
    & \ Y^\Gamma = -Y \\
    & \ X \succeq O.
\end{split}\end{align}
In particular, such a decomposition of $\Phi$ exists if and only if the optimal value of this semidefinite program is non-positive.

However, as with the examples from Section~\ref{sec:comparison_with_known}, this semidefinite program is only feasible to run when $m,n < 20$ or so. The numerical range approach of Theorem~\ref{thm:mu_minmax_from_num_range} can be adapted to this problem for much larger values of $m$ and $n$:

\begin{theorem}\label{thm:decomp_pos_map_from_range}
    Let $\Phi : M_m \rightarrow M_n$ be a transpose-preserving linear map. If $W^{1+i}_{\textup{min}}(C_\Phi) \geq 0$ then $\Phi$ can be written in the form~\eqref{eq:decomp_real} and is thus positive.
\end{theorem}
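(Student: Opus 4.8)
The plan is to convert the numerical‑range hypothesis into an explicit decomposition $C_\Phi = A + B$ in which $A$ is symmetric and positive semidefinite and $B$ is symmetric with $B^\Gamma = -B$; by the characterization of maps of the form~\eqref{eq:decomp_real} recalled just before Theorem~\ref{thm:positive_map} (a transpose‑preserving $\Phi$ admits such a decomposition iff $C_\Phi = A+B$ with $A$ symmetric PSD and $B$ symmetric with $B=-B^\Gamma$), producing such a decomposition is exactly what is needed.

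First, since $\Phi$ is transpose‑preserving the Choi matrix $C_\Phi$ is symmetric, so Theorem~\ref{thm:W_diag_equals_p} applies and gives $W^{1+i}_{\textup{min}}(C_\Phi) = \max_{p\in\R}\lambda_{\textup{min}}\big(pC_\Phi + (1-p)C_\Phi^\Gamma\big)$. Hence the assumption $W^{1+i}_{\textup{min}}(C_\Phi)\ge 0$ hands us a particular $p^\ast\in\R$ with $\lambda_{\textup{min}}\big(p^\ast C_\Phi + (1-p^\ast)C_\Phi^\Gamma\big)\ge 0$. Writing $M \defeq p^\ast C_\Phi + (1-p^\ast)C_\Phi^\Gamma$ and noting that $M$ is symmetric (a real combination of the symmetric matrices $C_\Phi$ and $C_\Phi^\Gamma$), this says precisely $M \succeq O$.

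Next I would apply Lemma~\ref{lem:bisymmetric_part_real} to decompose $C_\Phi = X + Y$ with $X,Y$ symmetric, $X^\Gamma = X$ and $Y^\Gamma = -Y$; then $C_\Phi^\Gamma = X - Y$ and a one‑line computation gives $M = X + (2p^\ast - 1)Y$, so $C_\Phi - M = 2(1-p^\ast)Y$. Setting $A \defeq M$ and $B \defeq 2(1-p^\ast)Y$ yields $C_\Phi = A + B$ with $A$ symmetric and positive semidefinite and $B$ symmetric with $B^\Gamma = -B$. This is exactly the decomposition required to conclude, via the equivalence noted before Theorem~\ref{thm:positive_map}, that $\Phi$ can be written in the form~\eqref{eq:decomp_real}, and is therefore positive.

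I do not expect a genuine obstacle: the only real idea is to notice that the single matrix $M$ furnished by the numerical range can serve directly as the completely positive summand $A$, whereupon the remainder $C_\Phi - M$ is automatically forced to be a scalar multiple of the partial‑skew part $Y$ and hence lands in the ``$\Psi$ part.'' The only points meriting a sentence of care are that the maximum in Theorem~\ref{thm:W_diag_equals_p} is attained (so that $p^\ast$ genuinely exists — this is built into that theorem's statement, and can also be seen directly from concavity of $p\mapsto\lambda_{\textup{min}}(pC_\Phi+(1-p)C_\Phi^\Gamma)$ together with the fact that $C_\Phi - C_\Phi^\Gamma$ is traceless, so the function tends to $-\infty$ at $\pm\infty$ unless $C_\Phi^\Gamma = C_\Phi$), and that the partial transpose of a symmetric matrix is again symmetric, which is already used implicitly in Lemma~\ref{lem:bisymmetric_part_real}.
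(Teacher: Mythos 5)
Your proposal is correct and takes essentially the same approach as the paper: both proofs invoke Theorem~\ref{thm:W_diag_equals_p} to extract a $p^\ast$ with $p^\ast C_\Phi + (1-p^\ast)C_\Phi^\Gamma \succeq O$, designate this PSD matrix as the completely positive summand, and observe that the remainder is (a multiple of) the partial‑skew part $Y$ and thus vanishes on symmetric inputs. The paper carries the argument out at the level of maps, writing $\Phi = \Omega + \Psi$ with $\Omega := p\Phi + (1-p)(\Phi\circ T)$; you carry it out at the Choi‑matrix level using Lemma~\ref{lem:bisymmetric_part_real} and the equivalence recalled before Theorem~\ref{thm:positive_map}, but these are the same decomposition viewed through the Choi isomorphism, so no substantive difference.
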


\begin{proof}
    Since $\Phi$ is transpose-preserving, $C_\Phi$ is symmetric, so know from Theorem~\ref{thm:W_diag_equals_p} that
    \[
        W^{1+i}_{\textup{min}}(C_\Phi) = \max_{p \in \R}\big\{\lambda_{\textup{min}}(pC_{\Phi} + (1-p)C_{\Phi}^\Gamma)\big\}.
    \]
    The hypothesis $W^{1+i}_{\textup{min}}(C_\Phi) \geq 0$ is thus equivalent to the existence of a particular $p \in \R$ such that $pC_{\Phi} + (1-p)C_{\Phi}^\Gamma$ is positive semidefinite. That is, $\Omega := p\Phi + (1-p)(\Phi \circ T)$ is completely positive, where $T$ is the transpose map.
    
    Rearranging this equation shows that $\Phi = \Omega - (1-p)(\Phi \circ T - \Phi)$. Since $\Omega$ is completely positive, it can be written in the form $\Omega(X) = \sum_j A_jXA_j^T$, and we can choose $\Psi := -(1-p)(\Phi \circ T - \Phi)$ so that $\Phi = \Omega + \Psi$ is a decomposition of the form~\eqref{eq:decomp_real}.
\end{proof}

Despite how much computationally simpler Theorem~\ref{thm:decomp_pos_map_from_range} is to implement than the semidefinite program~\eqref{eq:sdp_decomp_real}, it performs very well in practice, as illustrated by the next example.

\begin{example}\label{exx:choi_map}
    Let $c \in \R$ and consider the linear map $\Phi_c : M_3 \rightarrow M_3$ defined by
    \begin{align}\label{eq:generalized_choi_map}
        \Phi_c(X) := \begin{bmatrix}
            x_{1,1} + cx_{2,2} + x_{3,3} & -x_{1,2} & -x_{1,3} \\
            -x_{2,1} & x_{1,1} + x_{2,2} + cx_{3,3} & -x_{2,3} \\
            -x_{3,1} & -x_{3,2} & cx_{1,1} + x_{2,2} + x_{3,3}
        \end{bmatrix}.
    \end{align}
    This map is positive exactly when $c \geq 0$ (when $c = 0$ it is the Choi map from Equation~\eqref{eq:choi_map_3x3}), and the semidefinite program~\eqref{eq:sdp_decomp_real} shows that it can be written in the form~\eqref{eq:decomp_real} exactly when $c \geq 1/4$.
    
    Theorem~\ref{thm:decomp_pos_map_from_range} captures this behavior perfectly: $W^{1+i}_{\textup{min}}(C_{\Phi_c}) \geq 0$ exactly when $c \geq 1/4$ (see Figure~\ref{fig:choi_like}). By contrast, the trivial eigenvalue bounds $\lambda_{\textup{min}}(C_{\Phi_c})$ and $\lambda_{\textup{min}}(C_{\Phi_c}^\Gamma)$ only show that $\Phi_c$ can be written in the form~\eqref{eq:decomp_real} when $c \geq 1$, since $\lambda_{\textup{min}}(C_{\Phi_c}) = -1 < 0$ for all $c$ and $\lambda_{\textup{min}}(C_{\Phi_c}^\Gamma) \geq 0$ only when $c \geq 1$.
    % MATLAB CODE:
%cvx_begin sdp quiet
%cvx_precision best;
%variable c(1,1)
%variable X(9,9) symmetric
%variable Y(9,9)
%minimize c
%subject to
%C = [[1 0 0;0 1 0;0 0 c], [0 -1 0;0 0 0;0 0 0], [0 0 -1;0 0 0;0 0 0];
%[0 0 0;-1 0 0;0 0 0], [c 0 0;0 1 0;0 0 1], [0 0 0;0 0 -1;0 0 0];
%[0 0 0;0 0 0;-1 0 0], [0 0 0;0 0 0;0 -1 0], [1 0 0;0 c 0;0 0 1]];
%C == X + Y;
%PartialTranspose(Y) == -Y;
%X >= 0;
%cvx_end
%cvx_optval

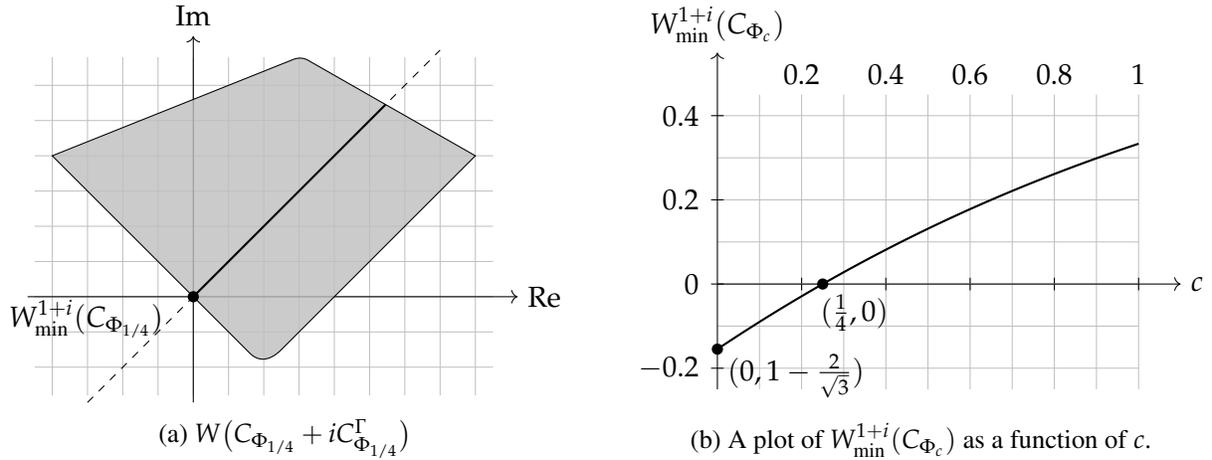
\begin{figure}[htb]
\centering
\begin{subfigure}[b]{0.48\textwidth}
    \centering
	\begin{tikzpicture}[scale=1.875]
		% Vertical grid lines
		\foreach \x in {-1,-0.75,...,2} {
			\draw[ultra thin, gray!50!white] (\x, -0.7) -- (\x, 1.7);
		}

		% Horizontal grid lines
		\foreach \y in {-0.5,-0.25,...,1.5} {
			\draw[ultra thin, gray!50!white] (-1.125, \y) -- (2.125, \y);
		}

		% Axes
		\draw [->] (-1.175, 0) -- (2.3, 0) node[anchor=west] {$\mathrm{Re}$}; % X axis
		\draw [->] (0, -0.75) -- (0, 1.85) node[anchor=south] {$\mathrm{Im}$}; % Y axis

		\draw [dashed] (-0.75,-0.75) -- (1.75,1.75); % y=x

		% W(A)
		\draw [fill=gray!50, fill opacity=0.8]
(2.000000, 1.000000) -- (0.622085, -0.377884) -- (0.611004, -0.388299) -- (0.600760, -0.397147) -- (0.591252, -0.404689) -- (0.582389, -0.411133) -- (0.574093, -0.416649) -- (0.566297, -0.421374) -- (0.558941, -0.425420) -- (0.551973, -0.428881) -- (0.545348, -0.431833) -- (0.539025, -0.434338) -- (0.532970, -0.436448) -- (0.527149, -0.438206) -- (0.521536, -0.439649) -- (0.516104, -0.440804) -- (0.510830, -0.441697) -- (0.505694, -0.442346) -- (0.500675, -0.442768) -- (0.495756, -0.442975) -- (0.490920, -0.442975) -- (0.486151, -0.442775) -- (0.481434, -0.442379) -- (0.476755, -0.441788) -- (0.472100, -0.441001) -- (0.467455, -0.440013) -- (0.462807, -0.438820) -- (0.458143, -0.437411) -- (0.453448, -0.435776) -- (0.448709, -0.433899) -- (0.443912, -0.431762) -- (0.439042, -0.429344) -- (0.434083, -0.426617) -- (0.429020, -0.423550) -- (0.423836, -0.420104) -- (0.418512, -0.416234) -- (0.413029, -0.411886) -- (0.407366, -0.406996) -- (0.401500, -0.401485) -- (-1.000000, 1.000000) -- (0.710974, 1.684338) -- (0.717029, 1.686448) -- (0.722850, 1.688206) -- (0.728463, 1.689649) -- (0.733895, 1.690804) -- (0.739169, 1.691697) -- (0.744305, 1.692346) -- (0.749324, 1.692768) -- (0.754243, 1.692975) -- (0.759079, 1.692975) -- (0.763848, 1.692775) -- (0.768565, 1.692379) -- (0.773244, 1.691788) -- (0.777899, 1.691001) -- (0.782544, 1.690013) -- (0.787192, 1.688820) -- (0.791856, 1.687411) -- (0.796551, 1.685776) -- (0.801290, 1.683899) -- (0.806087, 1.681762) -- (0.810957, 1.679344) -- cycle;

		\draw [thick] (0,0) -- (1.363636,1.363636);
		\node[circle, fill, inner sep=1.5pt] at (0,0) {};
		%\node[circle, fill, inner sep=1pt] at (1.363636,1.363636) {};

		% Labels
		%\node at (0, 0.9) {$W(C_{\Phi_{1/4}} + iC_{\Phi_{1/4}}^\Gamma)$};
		\node[anchor=north east, shift={(-0.26,0.09)}] at (0,0) {$W^{1+i}_\textup{min}(C_{\Phi_{1/4}})$};
		%\node[anchor=east, shift={(-0.2,0)}] at (1.363636,1.363636) {$\mu_\textup{max}(A)$};
	\end{tikzpicture}
    \caption{$W\big(C_{\Phi_{1/4}} + iC_{\Phi_{1/4}}^\Gamma\big)$}\label{fig:WA_Choi_map}
\end{subfigure} \hfill \begin{subfigure}[b]{0.48\textwidth}
    \centering
	\begin{tikzpicture}[scale=5.6]
		% Vertical grid lines
		\foreach \x in {0.1,0.2,...,1} {
			\draw[ultra thin, gray!50!white] (\x, -0.25) -- (\x, 0.45);
		}
		\foreach \x in {0.2,0.4,...,1} {
			\draw (\x, -0.015) -- (\x, 0.015);
		}
		%\node[anchor=south, shift={(0.15,0)}] at (0, 0.45) {$0$};
		\node[anchor=south] at (0.2, 0.45) {$0.2$};
		\node[anchor=south] at (0.4, 0.45) {$0.4$};
		\node[anchor=south] at (0.6, 0.45) {$0.6$};
		\node[anchor=south] at (0.8, 0.45) {$0.8$};
		\node[anchor=south] at (1, 0.45) {$1$};

		% Horizontal grid lines
		\foreach \y in {-0.2,-0.1,...,0.4} {
			\draw[ultra thin, gray!50!white] (0, \y) -- (1, \y);
		}
		\foreach \y in {-0.2,0,...,0.4} {
			\draw (-0.015, \y) -- (0.015, \y);
		}
		\node[anchor=east] at (-0.02, -0.2) {$-0.2$};
		\node[anchor=east] at (-0.02, 0) {$0$};
		\node[anchor=east] at (-0.02, 0.2) {$0.2$};
		\node[anchor=east] at (-0.02, 0.4) {$0.4$};

		% Axes
		\draw[->] (0, 0) -- (1.1, 0) node[anchor=west] {$c$}; % X axis; % X axis
		\draw[->] (0, -0.25) -- (0, 0.55) node[anchor=south] {$W^{1+i}_{\textup{min}}(C_{\Phi_c})$}; % Y axis

		% W(A)
		\draw[thick]
(0.000000, -0.154700) -- (0.010000, -0.148053) -- (0.020000, -0.141444) -- (0.030000, -0.134873) -- (0.040000, -0.128341) -- (0.050000, -0.121848) -- (0.060000, -0.115393) -- (0.070000, -0.108976) -- (0.080000, -0.102598) -- (0.090000, -0.096258) -- (0.100000, -0.089956) -- (0.110000, -0.083693) -- (0.120000, -0.077468) -- (0.130000, -0.071281) -- (0.140000, -0.065133) -- (0.150000, -0.059022) -- (0.160000, -0.052950) -- (0.170000, -0.046915) -- (0.180000, -0.040919) -- (0.190000, -0.034960) -- (0.200000, -0.029039) -- (0.210000, -0.023156) -- (0.220000, -0.017311) -- (0.230000, -0.011503) -- (0.240000, -0.005732) -- (0.250000, 0.000000) -- (0.260000, 0.005695) -- (0.270000, 0.011354) -- (0.280000, 0.016975) -- (0.290000, 0.022559) -- (0.300000, 0.028106) -- (0.310000, 0.033617) -- (0.320000, 0.039091) -- (0.330000, 0.044528) -- (0.340000, 0.049929) -- (0.350000, 0.055293) -- (0.360000, 0.060621) -- (0.370000, 0.065913) -- (0.380000, 0.071169) -- (0.390000, 0.076389) -- (0.400000, 0.081574) -- (0.410000, 0.086722) -- (0.420000, 0.091836) -- (0.430000, 0.096914) -- (0.440000, 0.101956) -- (0.450000, 0.106964) -- (0.460000, 0.111937) -- (0.470000, 0.116875) -- (0.480000, 0.121779) -- (0.490000, 0.126648) -- (0.500000, 0.131482) -- (0.510000, 0.136283) -- (0.520000, 0.141050) -- (0.530000, 0.145783) -- (0.540000, 0.150482) -- (0.550000, 0.155147) -- (0.560000, 0.159780) -- (0.570000, 0.164379) -- (0.580000, 0.168945) -- (0.590000, 0.173479) -- (0.600000, 0.177979) -- (0.610000, 0.182448) -- (0.620000, 0.186884) -- (0.630000, 0.191287) -- (0.640000, 0.195659) -- (0.650000, 0.199999) -- (0.660000, 0.204308) -- (0.670000, 0.208585) -- (0.680000, 0.212831) -- (0.690000, 0.217046) -- (0.700000, 0.221230) -- (0.710000, 0.225383) -- (0.720000, 0.229506) -- (0.730000, 0.233599) -- (0.740000, 0.237661) -- (0.750000, 0.241694) -- (0.760000, 0.245697) -- (0.770000, 0.249670) -- (0.780000, 0.253614) -- (0.790000, 0.257528) -- (0.800000, 0.261414) -- (0.810000, 0.265271) -- (0.820000, 0.269099) -- (0.830000, 0.272899) -- (0.840000, 0.276670) -- (0.850000, 0.280414) -- (0.860000, 0.284130) -- (0.870000, 0.287818) -- (0.880000, 0.291478) -- (0.890000, 0.295112) -- (0.900000, 0.298718) -- (0.910000, 0.302297) -- (0.920000, 0.305850) -- (0.930000, 0.309376) -- (0.940000, 0.312876) -- (0.950000, 0.316350) -- (0.960000, 0.319797) -- (0.970000, 0.323219) -- (0.980000, 0.326616) -- (0.990000, 0.329987) -- (1.000000, 0.333333);

		\node[circle, fill, inner sep=1.5pt] at (0.25,0) {};
		\node[circle, fill, inner sep=1.5pt] at (0,-0.1547) {};

		% Labels
		\node[anchor=north west] at (0.22,0) {$(\tfrac{1}{4},0)$};
		\node[anchor=north west, shift={(0,0.07)}] at (0,-0.1547) {$(0,1-\tfrac{2}{\sqrt{3}})$};
	\end{tikzpicture}
%j=1;for c = 0:0.01:1
%C = [[1 0 0;0 1 0;0 0 c], [0 -1 0;0 0 0;0 0 0], [0 0 -1;0 0 0;0 0 0];
%[0 0 0;-1 0 0;0 0 0], [c 0 0;0 1 0;0 0 1], [0 0 0;0 0 -1;0 0 0];
%[0 0 0;0 0 0;-1 0 0], [0 0 0;0 0 0;0 -1 0], [1 0 0;0 c 0;0 0 1]];
%A = C + 1i*PartialTranspose(C);
%xx(j) = c;
%yy(j) = get_mu(A,'minimum');
%j=j+1;
%end
%plot(xx,yy)
	
    \caption{A plot of $W^{1+i}_{\textup{min}}(C_{\Phi_{c}})$ as a function of $c$.}
\end{subfigure}
\caption{Theorem~\ref{thm:decomp_pos_map_from_range} shows that the map $\Phi_c$ from Equation~\eqref{eq:generalized_choi_map} can be written in the form~\eqref{eq:decomp_real}, and is thus positive, when $c \geq 1/4$.}\label{fig:choi_like}
\end{figure}
\end{example}

Since there is a straightforward correspondence between positive linear maps and biquadratic forms (see \cite{Cho75} for details), the results in this section immediately apply to the problem of trying to show that a biquadratic form can be written as a sum of squares.

\section{Generalization to Complex Matrices}\label{sec:complex}

Given the ubiquity of complex tensor optimizations and positive linear maps acting on complex matrices in quantum information theory (see \cite{Joh12,HM13} and the references therein, for example), it is natural to ask how far our results from the previous section can generalize to optimizations over complex product vectors. We thus now define complex versions of the quantities from Equation~\eqref{eq:main_minmax}:
\begin{align}\begin{split}\label{eq:main_minmax_complex}
    \mu_{\textup{min}}^\C(B) & \defeq \min_{\mathbf{v} \in \C^m,\mathbf{w} \in \C^n}\big\{ (\mathbf{v} \otimes \mathbf{w})^*B(\mathbf{v} \otimes \mathbf{w}) : \|\mathbf{v}\| = \|\mathbf{w}\| = 1 \big\} \quad \text{and} \\
    \mu_{\textup{max}}^\C(B) & \defeq \max_{\v \in \C^m,\w \in \C^n}\big\{ (\v \otimes \mathbf{w})^*B(\v \otimes \w) : \|\v\| = \|\w\| = 1 \big\}.
\end{split}\end{align}

Note that we still require $B$ to be real symmetric (not complex Hermitian), since our goal is to find cases where the optimal values of the complex optimization problems from Equation~\eqref{eq:main_minmax_complex} simply equal the corresponding real optimal values from Equation~\eqref{eq:main_minmax}. If we were to allow $B$ to be complex Hermitian then there is essentially no hope of this happening, as illustrated by the next example:

\begin{example}\label{exx:real_opt_required_even_hermit}
    Consider the Hermitian matrix
    \[
        B = \begin{bmatrix}
            0 & 0 & 0 & i \\
            0 & 0 & i & 0 \\
            0 & -i & 0 & 0 \\
            -i & 0 & 0 & 0
        \end{bmatrix} \in M_2(\C) \otimes M_2(\C).
    \]
    It is straightforward to check that, for all $\v, \w \in \R^2$, we have
    \[
        (\v \otimes \w)^*B(\v \otimes \w) =
        \begin{bmatrix}
            v_1w_1 & v_1w_2 & v_2w_1 & v_2w_2
        \end{bmatrix}
        \begin{bmatrix}
            iv_2w_2 \\ iv_2w_1 \\ -iv_1w_2 \\ -iv_1w_1
        \end{bmatrix}
        = 0,
    \]
    which implies $\mu_{\textup{min}}(B) = \mu_{\textup{max}}(B) = 0$.\footnote{Strictly speaking, $\mu_{\textup{min}}(B)$ and $\mu_{\textup{max}}(B)$ are not defined here, since $B$ was assumed to be real in the defining Equation~\eqref{eq:main_minmax}. For the purposes of this example we have just extended the definition to allow complex matrices $B$ (while still optimizing over real vectors $\v$ and $\w$).} On the other hand, $\mu_{\textup{min}}^{\C}(B) = -1$ and $\mu_{\textup{max}}^{\C}(B) = 1$, which can be verified by noting that the eigenvalues of $B$ are $\lambda_{\textup{max}}(B) = 1$ and $\lambda_{\textup{min}}(B) = -1$, and some corresponding eigenvectors are $(1,-i) \otimes (1,1)$ and $(1,i) \otimes (1,1)$, respectively.
\end{example}

The following lemma shows that if we require $B$ to be real and have some extra symmetry properties, then problems like the one described in Example~\ref{exx:real_opt_required_even_hermit} no longer occur, and we do indeed have $\mu_{\textup{min}}(B) = \mu_{\textup{min}}^{\C}(B)$ and $\mu_{\textup{max}}(B) = \mu_{\textup{max}}^{\C}(B)$:

\begin{lemma}\label{lem:bisymmetric_part_hermitian}
    Let $B \in M_m \otimes M_n$ be symmetric and define $X = (B + B^\Gamma)/2$. Then
    \[
        \mu_{\textup{min}}(B) = \mu_{\textup{min}}(X) = \mu_{\textup{min}}^{\C}(X) \quad \text{and} \quad \mu_{\textup{max}}(B) = \mu_{\textup{max}}(X) = \mu_{\textup{max}}^{\C}(X).
    \]
\end{lemma}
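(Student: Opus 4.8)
The plan is as follows. First, the equalities $\mu_{\textup{min}}(B) = \mu_{\textup{min}}(X)$ and $\mu_{\textup{max}}(B) = \mu_{\textup{max}}(X)$ are already established in Lemma~\ref{lem:bisymmetric_part_real}, so the only new content is the pair of equalities $\mu_{\textup{min}}(X) = \mu_{\textup{min}}^{\C}(X)$ and $\mu_{\textup{max}}(X) = \mu_{\textup{max}}^{\C}(X)$. I will prove the statement about $\mu_{\textup{max}}$; the one about $\mu_{\textup{min}}$ follows by applying it to $-X$ (note $(-X)^\Gamma = -X^\Gamma = -X$, so $-X$ satisfies the same hypothesis). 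One direction, $\mu_{\textup{max}}(X) \leq \mu_{\textup{max}}^{\C}(X)$, is immediate since every real unit product vector is a complex unit product vector, so the complex optimization is over a larger set. The work is in the reverse inequality $\mu_{\textup{max}}^{\C}(X) \leq \mu_{\textup{max}}(X)$.

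For the reverse inequality, I would take complex unit vectors $\v \in \C^m$ and $\w \in \C^n$ attaining $\mu_{\textup{max}}^{\C}(X) = (\v \otimes \w)^* X (\v \otimes \w)$, and show that some \emph{real} product vector does at least as well. Write $\v = \a + i\b$ and $\w = \c + i\d$ with $\a,\b \in \R^m$ and $\c,\d \in \R^n$. Expanding $(\v \otimes \w)^* X (\v \otimes \w)$ using $X^T = X$ (so cross terms like $\a^T X' \b$ pair up) and the key property $X^\Gamma = X$ (which, via the computation in Equation~\eqref{eq:vw_part_trans}, says that for real vectors $\p \otimes \q$ we may freely transpose the second tensor factor inside $X$), I expect the expression to collapse to a real combination of the four real product-vector quadratic forms $(\a \otimes \c)^T X(\a \otimes \c)$, $(\a \otimes \d)^T X(\a \otimes \d)$, $(\b \otimes \c)^T X(\b \otimes \c)$, $(\b \otimes \d)^T X(\b \otimes \d)$ — specifically something like their sum, with the imaginary-valued cross terms cancelling precisely because of the $X^\Gamma = X$ symmetry. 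Since $\|\v\|^2 = \|\a\|^2 + \|\b\|^2 = 1$ and likewise for $\w$, the relevant coefficients $\|\a\|^2\|\c\|^2$, etc., are nonnegative and sum to $1$, exhibiting $\mu_{\textup{max}}^{\C}(X)$ as a convex combination of four real product-vector values (after renormalizing each real product vector to unit length, absorbing the norms into the coefficients). Each such value is at most $\mu_{\textup{max}}(X)$, so $\mu_{\textup{max}}^{\C}(X) \leq \mu_{\textup{max}}(X)$, as needed. Care is needed with degenerate cases where one of $\a,\b,\c,\d$ is zero, but those only make the convex combination degenerate, not invalid.

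The main obstacle is the expansion step: one must verify that all the genuinely complex (imaginary) contributions to $(\v \otimes \w)^* X (\v \otimes \w)$ vanish, and this is exactly where the hypothesis $X^\Gamma = X$ (not merely $X^T = X$) is essential — Example~\ref{exx:real_opt_required_even_hermit} shows the conclusion fails without it. I would organize this by computing $(\v \otimes \w)^* X(\v \otimes \w)$ blockwise: the real part naturally produces the four nonnegative-weighted real terms, while the imaginary part is a sum of terms each of which, after using $X^\Gamma = X$ to move a transpose onto a symmetric rank-one factor, equals its own negative and hence is zero. Once that cancellation is pinned down, the convexity conclusion is routine.
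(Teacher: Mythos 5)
Your proposal is correct and follows essentially the same route as the paper: reduce to showing $\mu_{\textup{max}}^{\C}(X) \leq \mu_{\textup{max}}(X)$, take complex optimizers $\v,\w$, use $X^\Gamma = X$ (to replace $\w\w^*$ by $\mathrm{Re}(\w\w^*)$) and $X^T = X$ (to replace $\v\v^*$ by $\mathrm{Re}(\v\v^*)$), then expand $\mathrm{Re}(\v\v^*)\otimes\mathrm{Re}(\w\w^*)$ as a trace-$1$ nonnegative combination of four real rank-one product matrices and invoke convexity. The paper phrases the cancellation in terms of $\mathrm{Re}(\v\v^*) = \mathrm{Re}(\v)\mathrm{Re}(\v)^T + \mathrm{Im}(\v)\mathrm{Im}(\v)^T$ rather than expanding $\v = \a + i\b$ from the start, but this is a cosmetic difference; your identification of exactly where $X^\Gamma = X$ (as opposed to merely $X^T = X$) is needed matches the paper.
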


\begin{proof}
    Since $B$ is symmetric, we know from Lemma~\ref{lem:bisymmetric_part_real} that
    \[
        \mu_{\textup{min}}(B) = \mu_{\textup{min}}(X) \quad \text{and} \quad \mu_{\textup{max}}(B) = \mu_{\textup{max}}(X).
    \]
    All that remains is to show that $\mu_{\textup{min}}(X) = \mu_{\textup{min}}^{\C}(X)$ and $\mu_{\textup{max}}(X) = \mu_{\textup{max}}^{\C}(X)$. We just prove that $\mu_{\textup{max}}(X) = \mu_{\textup{max}}^{\C}(X)$, since the corresponding minimization result follows similarly. Furthermore, the inequality $\mu_{\textup{max}}(X) \leq \mu_{\textup{max}}^{\C}(X)$ is immediate from the definition, so we just prove the opposite inequality.
    
    Thanks to compactness of the set of unit product vectors, there exist $\v \in \C^m$ and $\w \in \C^n$ with $\|\v\| = \|\w\| = 1$ and $(\v \otimes \w)^* X(\v \otimes \w) = \mu_{\textup{max}}^{\C}(X)$. If we use the fact that $X^\Gamma = X$, then algebra similar to that of Equation~\eqref{eq:vw_part_trans} then shows that
    \begin{align*}
        \tr\big(X(\mathbf{v}\mathbf{v}^* \otimes \mathbf{w}\mathbf{w}^*)\big) = (\mathbf{v} \otimes \mathbf{w})^* X (\mathbf{v} \otimes \mathbf{w}) = (\mathbf{v} \otimes \mathbf{w})^* X^\Gamma(\mathbf{v} \otimes \mathbf{w}) = \tr\big(X(\mathbf{v}\mathbf{v}^* \otimes \overline{\mathbf{w}\mathbf{w}^*})\big),
    \end{align*}
    so
    \begin{align*}
        \mu_{\textup{max}}^{\C}(X) & = \tr\big(X(\mathbf{v}\mathbf{v}^* \otimes \mathrm{Re}(\mathbf{w}\mathbf{w}^*))\big).
    \end{align*}
    Similar algebra, using the fact that $X^T = X$, then shows that
    \begin{align}\label{eq:mu_max_conv}
        \mu_{\textup{max}}^{\C}(X) & = \tr\big(X(\mathrm{Re}(\mathbf{v}\mathbf{v}^*) \otimes \mathrm{Re}(\mathbf{w}\mathbf{w}^*))\big).
    \end{align}
    
    The matrix $\mathrm{Re}(\mathbf{v}\mathbf{v}^*) \otimes \mathrm{Re}(\mathbf{w}\mathbf{w}^*)$ is clearly symmetric, and it is even positive semidefinite since
    \[
        \mathrm{Re}(\v\v^*) \otimes \mathrm{Re}(\w\w^*) = \big(\mathrm{Re}(\v)\mathrm{Re}(\v)^T + \mathrm{Im}(\v)\mathrm{Im}(\v)^T\big) \otimes \big(\mathrm{Re}(\w)\mathrm{Re}(\w)^T + \mathrm{Im}(\w)\mathrm{Im}(\w)^T\big).
    \]
    It is furthermore the case that $\tr(\mathrm{Re}(\v\v^*) \otimes \mathrm{Re}(\w\w^*)) = 1$, so if we define
    \begin{align*}
        \mathbf{x_1} & := \frac{\mathrm{Re}(\v) \otimes \mathrm{Re}(\w)}{\|\mathrm{Re}(\v) \otimes \mathrm{Re}(\w)\|}, & \mathbf{x_2} & := \frac{\mathrm{Re}(\v) \otimes \mathrm{Im}(\w)}{\|\mathrm{Re}(\v) \otimes \mathrm{Im}(\w)\|}, \\
        \mathbf{x_3} & := \frac{\mathrm{Im}(\v) \otimes \mathrm{Re}(\w)}{\|\mathrm{Im}(\v) \otimes \mathrm{Re}(\w)\|}, & \mathbf{x_4} & := \frac{\mathrm{Im}(\v) \otimes \mathrm{Im}(\w)}{\|\mathrm{Im}(\v) \otimes \mathrm{Im}(\w)\|},
    \end{align*}
    then the quantity in Equation~\eqref{eq:mu_max_conv} is equal to some convex combination of the quantities
    \[
        \tr\big(X(\mathbf{x_1}\mathbf{x_1}^T)\big), \quad \tr\big(X(\mathbf{x_2}\mathbf{x_2}^T)\big), \quad \tr\big(X(\mathbf{x_3}\mathbf{x_3}^T)\big), \quad \text{and} \quad \tr\big(X(\mathbf{x_4}\mathbf{x_4}^T)\big).
    \]
    It follows that there exists some $1 \leq j \leq 4$ such that
    \[
        \mu_{\textup{max}}^{\C}(X) \leq \tr\big(X(\mathbf{x_j}\mathbf{x_j}^T)\big) = \mathbf{x_j}^TX\mathbf{x_j} \leq \mu_{\textup{max}}(X),
    \]
    which completes the proof.
\end{proof}

It might be tempting to conjecture that if $B$ is real symmetric then $\mu_{\textup{min}}(B) = \mu_{\textup{min}}^{\C}(B)$, thus filling in an apparent ``gap'' in Lemma~\ref{lem:bisymmetric_part_hermitian}. We now present an example to show that this is not true:

\begin{example}\label{exx:real_opt_required}
    Consider the symmetric matrix
    \[
        B = \begin{bmatrix}
            0 & 0 & 0 & 1 \\
            0 & 0 & -1 & 0 \\
            0 & -1 & 0 & 0 \\
            1 & 0 & 0 & 0
        \end{bmatrix} \in M_2 \otimes M_2.
    \]
    Since $B^\Gamma = -B$, we know from Lemma~\ref{lem:bisymmetric_part_real} that $(\v \otimes \w)^TB(\v \otimes \w) = 0$ for all $\v, \w \in \R^2$, which implies $\mu_{\textup{min}}(B) = \mu_{\textup{max}}(B) = 0$. On the other hand, $\mu_{\textup{min}}^{\C}(B) = -1$ and $\mu_{\textup{max}}^{\C}(B) = 1$, which can be verified by noting that the eigenvalues of $B$ are $\lambda_{\textup{max}}(B) = 1$ and $\lambda_{\textup{min}}(B) = -1$, and some corresponding eigenvectors are $(1,i) \otimes (1,-i)$ and $(1,i) \otimes (1,i)$, respectively.
\end{example}

\begin{remark}
    Lemma~\ref{lem:bisymmetric_part_hermitian} tells us that the product vector optimizations introduced in Equation~\eqref{eq:main_minmax} and~\eqref{eq:main_minmax_complex} do not depend on the choice of field ($\R$ or $\C$) when real $B \in M_m \otimes M_n$ satisfies $B = B^T = B^\Gamma$. These constraints on $B$ are equivalent to the requirement that $B \in M_m^{\textup{S}} \otimes M_n^{\textup{S}}$.
    
    This vector space is strictly smaller than the vector space of symmetric matrices in $M_m \otimes M_n$: the former has dimension $mn(m+1)(n+1)/4$, while the latter has dimension $mn(mn+1)/2$. This contrasts quite starkly with the complex case, where (here we use $M_n^{\textup{H}}$ to denote the set of $n \times n$ complex Hermitian matrices) $M_m^{\textup{H}} \otimes M_n^{\textup{H}}$ is isomorphic in a natural way to the vector space of Hermitian matrices in $M_m(\C) \otimes M_n(\C)$; they are both real vector spaces with dimension $m^2n^2$. See \cite{Hil08} for further discussion of these sorts of issues.
\end{remark}

Examples~\ref{exx:real_opt_required_even_hermit} and~\ref{exx:real_opt_required} show that we have to be extremely careful when trying to apply our results to optimizations over complex product vectors---the matrix $B$ has to be real, symmetric, and equal to its own partial transpose:

\begin{corollary}\label{cor:num_range_complex}
    Suppose $X \in M_m^\textup{S} \otimes M_n^\textup{S}$ and let $B \in M_m \otimes M_n$ be any matrix for which $X = (B + B^T + B^\Gamma + (B^T)^\Gamma)/4$. Then
    \[
        \mu_{\textup{min}}^{\C}(X) \geq W^{1+i}_{\textup{min}}(B) \quad \text{and} \quad \mu_{\textup{max}}^{\C}(X) \leq W^{1+i}_{\textup{max}}(B).
    \]
\end{corollary}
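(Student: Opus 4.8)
The plan is to chain together three facts that are already available in the paper. First I would unpack the hypothesis: as noted in the Remark following Lemma~\ref{lem:bisymmetric_part_hermitian}, the condition $X \in M_m^{\textup{S}} \otimes M_n^{\textup{S}}$ is equivalent to $X = X^T = X^\Gamma$. In particular $X$ is symmetric and $X = (X + X^\Gamma)/2$, so Lemma~\ref{lem:bisymmetric_part_hermitian}, applied with its matrix ``$B$'' taken to be $X$ itself, gives $\mu_{\textup{min}}^{\C}(X) = \mu_{\textup{min}}(X)$ and $\mu_{\textup{max}}^{\C}(X) = \mu_{\textup{max}}(X)$. This step converts the complex product-vector optimization into the real one, which is the only place the special structure of $X$ is used.

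Second, I would invoke the ``partial Cartesian decomposition'' identity stated just after Lemma~\ref{lem:bisymmetric_part_real}, namely $\mu_{\textup{min}}(B) = \tfrac14 \mu_{\textup{min}}\big(B + B^T + B^\Gamma + (B^T)^\Gamma\big)$ together with its $\mu_{\textup{max}}$ analogue. Since $\mu_{\textup{min}}$ and $\mu_{\textup{max}}$ are positively homogeneous of degree one, and the hypothesis is precisely $X = (B + B^T + B^\Gamma + (B^T)^\Gamma)/4$, this reads $\mu_{\textup{min}}(B) = \mu_{\textup{min}}(X)$ and $\mu_{\textup{max}}(B) = \mu_{\textup{max}}(X)$.

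Third, I would apply Theorem~\ref{thm:mu_minmax_from_num_range} to $B$, giving $\mu_{\textup{min}}(B) \geq W^{1+i}_{\textup{min}}(B)$ and $\mu_{\textup{max}}(B) \leq W^{1+i}_{\textup{max}}(B)$. Composing the three steps then yields
\[
    \mu_{\textup{min}}^{\C}(X) = \mu_{\textup{min}}(X) = \mu_{\textup{min}}(B) \geq W^{1+i}_{\textup{min}}(B),
\]
and symmetrically $\mu_{\textup{max}}^{\C}(X) = \mu_{\textup{max}}(X) = \mu_{\textup{max}}(B) \leq W^{1+i}_{\textup{max}}(B)$, which is the claim.

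There is no real obstacle here: the corollary is essentially the formal composition of Lemma~\ref{lem:bisymmetric_part_hermitian} (field-independence of the optimum on $M_m^{\textup{S}}\otimes M_n^{\textup{S}}$), the homogeneity-plus-symmetrization reduction $\mu(B) = \mu(X)$, and Theorem~\ref{thm:mu_minmax_from_num_range}. The only points deserving a line of care are (i) checking that $X$ genuinely satisfies $X = X^T = X^\Gamma$ so that Lemma~\ref{lem:bisymmetric_part_hermitian} is applicable, and (ii) noting that degree-one homogeneity of $\mu_{\textup{min}}$ and $\mu_{\textup{max}}$ lets the factor $1/4$ pass inside. I expect the write-up to be only a few lines long.
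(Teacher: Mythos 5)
Your argument is correct and follows essentially the same route as the paper, which simply says the corollary ``follows immediately from combining Theorem~\ref{thm:mu_minmax_from_num_range} and Lemma~\ref{lem:bisymmetric_part_hermitian}.'' You have merely spelled out the intermediate step that the paper leaves implicit: reducing $\mu_{\textup{min}}(B)$ to $\mu_{\textup{min}}(X)$ via the partial Cartesian decomposition identity and degree-one homogeneity, then invoking Lemma~\ref{lem:bisymmetric_part_hermitian} (applied to the already-symmetrized $X$, which works since $X = X^T = X^\Gamma$ makes the lemma's symmetrization trivial), and finally Theorem~\ref{thm:mu_minmax_from_num_range} on $B$.
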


\begin{proof}
   This follows immediately from combining Theorem~\ref{thm:mu_minmax_from_num_range} and Lemma~\ref{lem:bisymmetric_part_hermitian}.
\end{proof}

\subsection{Application to Entanglement Witnesses}\label{sec:ent_wit}

A standard application of Corollary~\ref{cor:num_range_complex} would be to show that a given matrix $X \in M_m^\textup{H} \otimes M_n^\textup{H}$ is, in the terminology of quantum information theory, an \emph{entanglement witness} \cite{Ter00}, which just means that $X$ has $\mu_{\textup{min}}^{\C}(X) \geq 0$ but is not positive semidefinite. We illustrate this procedure with an example.

\begin{example}\label{exam:choi_like_complex}
    Recall the Choi map $\Phi$ from Equation~\eqref{eq:choi_map_3x3}. It is known that (if $\cdot$ denotes $0$) the matrix
    \begin{align*}
        X = \frac{1}{4}\big(C_\Phi + C_\Phi^T + C_\Phi^\Gamma + (C_\Phi^T)^\Gamma\big) = \frac{1}{2}\left[\begin{array}{@{}ccc|ccc|ccc@{}}
             2 &  \cdot &  \cdot &  \cdot & -1 &  \cdot &  \cdot &  \cdot & -1 \\
             \cdot &  \cdot &  \cdot & -1 &  \cdot &  \cdot &  \cdot &  \cdot &  \cdot \\
             \cdot &  \cdot &  2 &  \cdot &  \cdot &  \cdot & -1 &  \cdot &  \cdot \\\hline
             \cdot & -1 &  \cdot &  2 &  \cdot &  \cdot &  \cdot &  \cdot &  \cdot \\
            -1 &  \cdot &  \cdot &  \cdot &  2 &  \cdot &  \cdot &  \cdot & -1 \\
             \cdot &  \cdot &  \cdot &  \cdot &  \cdot &  \cdot &  \cdot & -1 &  \cdot \\\hline
             \cdot &  \cdot & -1 &  \cdot &  \cdot &  \cdot &  \cdot &  \cdot &  \cdot \\
             \cdot &  \cdot &  \cdot &  \cdot &  \cdot & -1 &  \cdot &  2 &  \cdot \\
            -1 &  \cdot &  \cdot &  \cdot & -1 &  \cdot &  \cdot &  \cdot &  2
        \end{array}\right]
    \end{align*}
    is an entanglement witness, but verifying this fact computationally is difficult. To get an idea of the effectiveness of a computational technique for showing that a matrix is an entanglement witness, we can ask for the smallest $0 \leq c \in \R$ such that the technique succeeds on the input matrix $X + cI$.
    
    Perhaps the computationally simplest such $c$ to find is $c = -\lambda_{\textup{min}}(X)$, which is equal to $(\sqrt{2}-1)/2 \approx 0.2071$ in this case. However, $X + cI$ is technically not an entanglement witness for this value of $c$, since it is actually positive semidefinite. A better (i.e., smaller) $c$ can be computed by the following (much more expensive) semidefinite programming relaxation, which is common in quantum information theory:
    \begin{align}\begin{split}\label{eq:sdp_complex_decomp}
        \textup{minimize:} & \ c \\
        \textup{subject to:} & \ X + cI = Y + Z^\Gamma \\
        & \ Y,Z \succeq O.
    \end{split}\end{align}
    
    This semidefinite program has an optimal value of $c = 2/\sqrt{3} - 1 \approx 0.1547$, which is slightly better than the trivial eigenvalue bound. In quantum information theory terminology, this means that $X + cI$ is a \emph{decomposable} entanglement witness \cite{ATL11} for this value of $c$. The bound of Corollary~\ref{cor:num_range_complex} does just as well and shows that $X + cI$ is an entanglement witness when $c = -W^{1+i}_{\textup{min}}(C_\Phi) = 2/\sqrt{3} - 1 \approx 0.1547$ (see Figure~\ref{fig:choi_itself} and also compare with Figure~\ref{fig:choi_like}) via a fraction of the computational resources.
    
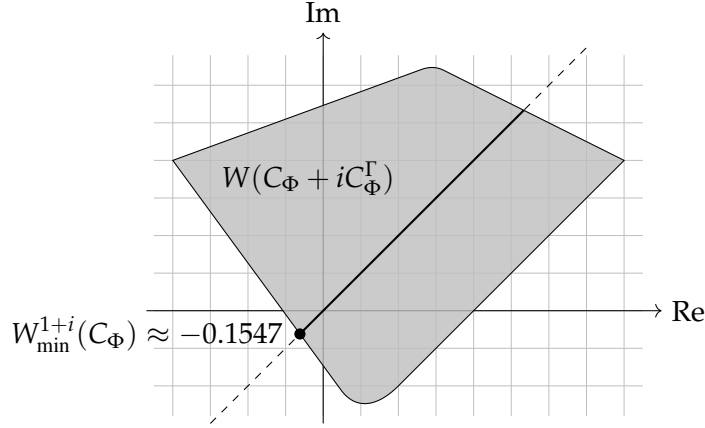
\begin{figure}[htb]
\centering
	\begin{tikzpicture}[scale=2]
		% Vertical grid lines
		\foreach \x in {-1,-0.75,...,2} {
			\draw[ultra thin, gray!50!white] (\x, -0.7) -- (\x, 1.7);
		}

		% Horizontal grid lines
		\foreach \y in {-0.5,-0.25,...,1.5} {
			\draw[ultra thin, gray!50!white] (-1.125, \y) -- (2.125, \y);
		}

		% Axes
		\draw [->] (-1.175, 0) -- (2.25, 0) node[anchor=west] {$\mathrm{Re}$}; % X axis
		\draw [->] (0, -0.75) -- (0, 1.85) node[anchor=south] {$\mathrm{Im}$}; % Y axis

		\draw [dashed] (-0.75,-0.75) -- (1.75,1.75); % y=x

		% W(A)
		\draw [fill=gray!50, fill opacity=0.8]
(2.000000, 1.000000) -- (0.494818, -0.505127) -- (0.475131, -0.523630) -- (0.456978, -0.539313) -- (0.440192, -0.552627) -- (0.424625, -0.563946) -- (0.410142, -0.573576) -- (0.396620, -0.581771) -- (0.383954, -0.588739) -- (0.372048, -0.594654) -- (0.360817, -0.599657) -- (0.350188, -0.603869) -- (0.340093, -0.607387) -- (0.330474, -0.610293) -- (0.321279, -0.612656) -- (0.312460, -0.614532) -- (0.303975, -0.615968) -- (0.295786, -0.617004) -- (0.287860, -0.617671) -- (0.280163, -0.617994) -- (0.272669, -0.617995) -- (0.265350, -0.617688) -- (0.258183, -0.617087) -- (0.251144, -0.616198) -- (0.244213, -0.615026) -- (0.237368, -0.613571) -- (0.230591, -0.611831) -- (0.223865, -0.609800) -- (0.217170, -0.607468) -- (0.210490, -0.604823) -- (0.203807, -0.601847) -- (0.197107, -0.598520) -- (0.190371, -0.594816) -- (0.183583, -0.590704) -- (0.176727, -0.586148) -- (0.169787, -0.581104) -- (0.162746, -0.575521) -- (0.155586, -0.569339) -- (0.148293, -0.562487) -- (0.140848, -0.554881) -- (0.133236, -0.546423) -- (0.125442, -0.536997) -- (-1.000000, 1.000000) -- (0.659906, 1.607387) -- (0.669525, 1.610293) -- (0.678720, 1.612656) -- (0.687539, 1.614532) -- (0.696024, 1.615968) -- (0.704213, 1.617004) -- (0.712139, 1.617671) -- (0.719836, 1.617994) -- (0.727330, 1.617995) -- (0.734649, 1.617688) -- (0.741816, 1.617087) -- (0.748855, 1.616198) -- (0.755786, 1.615026) -- (0.762631, 1.613571) -- (0.769408, 1.611831) -- (0.776134, 1.609800) -- (0.782829, 1.607468) -- (0.789509, 1.604823) -- (0.796192, 1.601847) -- cycle;

		\draw [thick] (-0.154700,-0.154700) -- (1.333333,1.333333);
		\node[circle, fill, inner sep=1.5pt] at (-0.154700,-0.154700) {};
		%\node[circle, fill, inner sep=1pt] at (1.333333,1.333333) {};

		% Labels
		\node at (-0.1, 0.88) {$W(C_\Phi + iC_{\Phi}^\Gamma)$};
		\node[anchor=east, shift={(-0.1,0)}] at (-0.154700,-0.154700) {$W^{1+i}_{\textup{min}}(C_\Phi) \approx -0.1547$};
		%\node[anchor=east, shift={(-0.15,0)}] at (1.333333,1.333333) {$\mu_{max}(A)$};
	\end{tikzpicture}
\caption{The numerical range of $C_\Phi + iC_{\Phi}^\Gamma$, where $\Phi$ is the Choi map on $M_3$.}\label{fig:choi_itself}
\end{figure}
\end{example}

\section{Multipartite Generalization}\label{sec:multipartite}

We now (and for the rest of the paper) return to the case of only considering real matrices and vectors, and we generalize our results to tensor optimizations in which there are more than two tensor factors. Throughout this section, we fix positive integers $p$, $n_1$, $n_2$, $\ldots$, $n_{p}$, and a matrix $B \in M_{n_1} \otimes \cdots \otimes M_{n_p}$. Our goal is to show how our results (Theorems~\ref{thm:mu_minmax_from_num_range} and~\ref{thm:W_diag_equals_p} in particular) can be generalized to bound the quantities
\begin{align}\begin{split}\label{eq:main_minmax_multipartite_general}
    \mu_\textup{min}(B) & \defeq \min_{\mathbf{v_j}\in\R^{n_j}} \big\{ (\mathbf{v_1} \otimes \cdots \otimes \mathbf{v_p})^T B(\mathbf{v_{1}} \otimes \cdots \otimes \mathbf{v_{p}}) : \|\mathbf{v_j}\|=1 \ \text{for all} \ 1 \leq j \leq p \big\} \quad \text{and} \\
    \mu_\textup{max}(B) & \defeq \max_{\mathbf{v_j}\in\R^{n_j}} \big\{ (\mathbf{v_1} \otimes \cdots \otimes \mathbf{v_p})^T B(\mathbf{v_{1}} \otimes \cdots \otimes \mathbf{v_{p}}) : \|\mathbf{v_j}\|=1 \ \text{for all} \ 1 \leq j \leq p \big\}.
\end{split}\end{align}
If $p = 2$ then these quantities simplify to exactly the quantities~\eqref{eq:main_minmax} that we considered earlier.

Now that we are considering tensor optimizations on more than just two systems, we need to introduce two modifications of tools that we used previously before we can generalize Theorems~\ref{thm:mu_minmax_from_num_range} and~\ref{thm:W_diag_equals_p}. First, we cannot simply make use of the partial transpose of Equation~\eqref{eq:part_trans}. Instead, we can now transpose any combination of the $p$ subsystems, not just the second subsystem. That is, for $1 \leq j \leq p$ we define the $j$-th partial transpose of a matrix $B = \sum_\ell X_{1,\ell} \otimes X_{2,\ell} \otimes \cdots \otimes X_{p,\ell}$ as
\begin{align}\label{eq:alt_part_trans}
    \Gamma_j(B) \defeq \sum_\ell X_{1,\ell} \otimes \cdots \otimes X_{j-1,\ell} \otimes X_{j,\ell}^T \otimes X_{j+1,\ell} \otimes \cdots \otimes X_{p,\ell}.
\end{align}
Slightly more generally, if $S \subseteq [p]$ is any subset of $[p] = \{1,2,\ldots,p\}$, then we define $\Gamma_S$ to be the composition of $\Gamma_j$ for each $j \in S$.

Second, instead of the numerical range, we now make use of the \emph{joint} numerical range, which is defined for $A_1,A_2,\ldots,A_k \in M_n^\textup{S}$ by
\begin{align}\label{eq:joint_def}
    W(A_1,A_2,\ldots,A_k) \defeq \big\{(\v^*A_1\v, \v^*A_2\v, \ldots, \v^*A_k\v) : \v \in \C^n, \|\v\| = 1 \big\}.
\end{align}
In the special case when $k = 2$, the joint numerical range $W(A_1,A_2)$ is exactly equal to the (regular, non-joint) numerical range $W(A_1 + iA_2)$, if we identify $\R^2$ with $\C$ in the usual way. Unlike the numerical range, however, when $k \geq 4$ or $(k,n) = (3,2)$ the joint numerical range is not convex in general (see \cite{BL91}, for example).

Our bounds on $\mu_\textup{max}(B)$ and $\mu_\textup{min}(B)$ will depend on the following joint numerical range of several partial transpositions of of $B$. In particular, if $P = \{S_1,S_2,\ldots,S_k\} \subseteq \mathcal{P}([p])$ is any non-empty member of the power set of $[p]$, then we define
\begin{align}\label{eq:W_diag_multi}
    W^{P,\bm{1}}(B) \defeq \big\{c\in\R : (c,c,\ldots,c) \in W\big(\Gamma_{S_1}(B),\Gamma_{S_2}(B),\ldots,\Gamma_{S_k}(B)\big) \big\}.
\end{align}

As with the bipartite case of $W^{1+i}(B)$ from Equation~\eqref{eq:def_W_diag}, $W^{P,\bm{1}}(B)$ is always non-empty since it contains, for example $(\e_1 \otimes \cdots \otimes \e_1)^TB(\e_1 \otimes \cdots \otimes \e_1)$. Also similar to the bipartite case, we denote its extreme values by
\[
    W^{P,\bm{1}}_\textup{min}(B) \defeq \min\big\{c \in W^{P,\bm{1}}(B)\big\} \quad \text{and}
    \quad W^{P,\bm{1}}_\textup{max}(B) \defeq \max\big\{c \in W^{P,\bm{1}}(B)\big\}.
\]
Since $W\big(\Gamma_{S_1}(B),\Gamma_{S_2}(B),\ldots,\Gamma_{S_k}(B)\big)$ is compact~\cite[Property~(F)]{LR02}, so too is $W^{P,\bm{1}}(B)$, and thus these bounds are attained and well defined. We are finally in a position to state the main result of this section:

\begin{theorem}\label{thm:multipartite_mu_general}
    Let $B \in M_{n_1} \otimes \cdots \otimes M_{n_p}$ and let $P \subseteq \mathcal{P}([p])$ be non-empty. Then
    \begin{align*}
        \mu_\textup{min}(B) \geq W^{P,\bm{1}}_\textup{min}(B) \quad \text{and} \quad \mu_\textup{max}(B) \leq W^{P,\bm{1}}_\textup{max}(B).
    \end{align*}
\end{theorem}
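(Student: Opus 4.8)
The plan is to mimic the proof of Theorem~\ref{thm:mu_minmax_from_num_range}, with the partial transpose $\Gamma$ replaced by the family $\{\Gamma_{S_1},\ldots,\Gamma_{S_k}\}$ and the numerical range replaced by the joint numerical range. We prove only the upper bound $\mu_\textup{max}(B)\le W^{P,\bm 1}_\textup{max}(B)$; the lower bound follows by an essentially identical argument (or by applying the upper bound to $-B$).

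First I would invoke compactness of the set of unit product vectors $\mathbf{v_1}\otimes\cdots\otimes\mathbf{v_p}$ to obtain a maximizer $\x = \mathbf{v_1}\otimes\cdots\otimes\mathbf{v_p}$ with $\|\x\| = 1$ and $\x^T B\x = \mu_\textup{max}(B)$. The key observation is that for a \emph{real} product vector, the rank-one matrix $\x\x^T = (\mathbf{v_1}\mathbf{v_1}^T)\otimes\cdots\otimes(\mathbf{v_p}\mathbf{v_p}^T)$ is a tensor product of symmetric matrices, hence $\Gamma_j(\x\x^T) = \x\x^T$ for every $j\in[p]$, and therefore $\Gamma_S(\x\x^T) = \x\x^T$ for every $S\subseteq[p]$. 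Consequently, for each $S_i\in P$ the adjoint identity $\tr\big(\Gamma_{S_i}(B)\,M\big) = \tr\big(B\,\Gamma_{S_i}(M)\big)$ (valid since each $\Gamma_j$, and hence $\Gamma_{S_i}$, is self-adjoint with respect to the trace inner product) gives
\[
    \x^T \Gamma_{S_i}(B)\,\x = \tr\big(\Gamma_{S_i}(B)\,\x\x^T\big) = \tr\big(B\,\Gamma_{S_i}(\x\x^T)\big) = \tr\big(B\,\x\x^T\big) = \x^T B\x = \mu_\textup{max}(B).
\]

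Since $\x$ is in particular a unit vector in $\C^{n_1\cdots n_p}$ (with real entries), this shows that the tuple
\[
    \big(\x^*\Gamma_{S_1}(B)\x,\ \x^*\Gamma_{S_2}(B)\x,\ \ldots,\ \x^*\Gamma_{S_k}(B)\x\big) = (\mu_\textup{max}(B),\ \mu_\textup{max}(B),\ \ldots,\ \mu_\textup{max}(B))
\]
lies in $W\big(\Gamma_{S_1}(B),\ldots,\Gamma_{S_k}(B)\big)$, so by definition $\mu_\textup{max}(B)\in W^{P,\bm 1}(B)$, and hence $\mu_\textup{max}(B)\le W^{P,\bm 1}_\textup{max}(B)$, as desired.

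I expect no serious obstacle here: the only point requiring a little care is verifying that each $\Gamma_j$ (and hence each composition $\Gamma_{S_i}$) is a well-defined linear map on $M_{n_1}\otimes\cdots\otimes M_{n_p}$ that is self-adjoint with respect to the trace inner product, so that $\tr(\Gamma_{S_i}(B)M) = \tr(B\,\Gamma_{S_i}(M))$ holds — this is the multipartite analogue of Equation~\eqref{eq:vw_part_trans} and follows from the bipartite case applied one subsystem at a time. Unlike in the analysis that leads to Theorem~\ref{thm:W_diag_equals_p}, the possible non-convexity of the joint numerical range when $k\ge 4$ plays no role in \emph{this} theorem, since we only need membership of a single point in the set, not any intersection-of-intervals characterization.
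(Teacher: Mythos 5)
Your proof is correct and follows essentially the same argument as the paper: take a maximizing real product vector $\x$, observe that $\Gamma_S(\x\x^T)=\x\x^T$, use the self-adjointness of each $\Gamma_S$ under the trace inner product to conclude $\x^T\Gamma_S(B)\x=\mu_{\textup{max}}(B)$ for every $S\in P$, and hence place $\mu_{\textup{max}}(B)$ in $W^{P,\bm{1}}(B)$. The extra remarks you include (deriving $\Gamma_S(\x\x^T)=\x\x^T$ from the single-system case, and noting that non-convexity of the joint numerical range is irrelevant here) are sound but are already implicit in the paper's argument.
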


For example, if $p = 2$ and $P = \big\{\{\},\{2\}\big\}$, then this theorem recovers exactly Theorem~\ref{thm:mu_minmax_from_num_range}, since the two members of $P$ correspond to taking the partial transpose on no subsystems (i.e., $B$ itself) and taking the partial transpose on the 2nd subsystem (which we simply called $B^\Gamma$).

In general, choosing a larger set $P$ results in a tighter bound on $\mu_\textup{min}(B)$ and $\mu_\textup{min}(B)$ in Theorem~\ref{thm:multipartite_mu_general}, at the expense of increased computational resources. However, there is never any advantage to choosing $P$ so that $|P| > 2^{p-1}$ (which is why we chose $P$ with $|P| = 2$ in the $p = 2$ case, for example, rather than choosing $P = \big\{\{\},\{1\},\{2\},\{1,2\}\big\}$ with $|P| = 4$). To see why this is the case, notice that if $S \in P$ then there is nothing to be gained by having $[p] \setminus S \in P$, since $\Gamma_{[p] \setminus S}(B) = (T \circ \Gamma_S)(B)$, so $\x^T\Gamma_{[p] \setminus S}(B)\x = \x^T\Gamma_S(B)\x$ for all $\x \in R^{n_1\cdots n_p}$, so $W^{P,\bm{1}}(B)$ does not depend on whether or not $[p] \setminus S \in P$.

\begin{proof}[Proof of Theorem~\ref{thm:multipartite_mu_general}]
   We only prove the rightmost inequality for brevity, as the proof of the left inequality is almost identical.
   
   By compactness of the set of unit product vectors, we know that there exists $\x = \mathbf{v_1} \otimes \cdots \otimes \mathbf{v_p} \in \R^{n_1} \otimes \cdots \otimes \R^{n_p}$ such that $\norm{\x} = 1$ and $\x^TB\x = \mu_\textup{max}(B)$. Then for all $S \in P$ we have $\Gamma_S(\x\x^T) = \x\x^T$, so
    \[
        \x^T\Gamma_S(B)\x = \tr\big(\Gamma_S(B)\x\x^T\big) = \tr\big(B\Gamma_S(\x\x^T)\big) = \tr\big(B\x\x^T\big) = \x^TB\x = \mu_\textup{max}(B),
    \]
    where we used the fact that each partial transpose map $\Gamma_S$ is its own dual in the trace inner product (i.e., $\tr(\Gamma_S(X)^TY) = \tr(X^T\Gamma_S(Y))$ for all matrices $X$ and $Y$). It follows that $\mu_\textup{max}(B) \in W^{P,\bm{1}}(B)$, so $\mu_\textup{max}(B) \leq W^{P,\bm{1}}_\textup{max}(B)$.
\end{proof}

Our final result generalizes Theorem~\ref{thm:W_diag_equals_p} to this multipartite setting (once again, that result comes from choosing $p = 2$ and $P = \big\{\{\},\{2\}\big\}$ in this generalization). Note that we need $B$ to be symmetric in this theorem so that we can ensure that the eigenvalues considered are real.

\begin{theorem}\label{thm:multi_affine_minmax}
    Let $B \in M_{n_1} \otimes \cdots \otimes M_{n_p}$ be symmetric and let $P = \{S_1,S_2,\ldots,S_k\} \subseteq \mathcal{P}([p])$ be non-empty. Then
    \begin{align}\label{eq:min_max_eig}
        W^{P,\bm{1}}_\textup{min}(B)
        &= \max_{p_j \in \R}\bigg\{\lambda_{\textup{min}}\bigg(\sum_{j=1}^k p_j\Gamma_{S_j}(B)\bigg) : \sum_{j=1}^k p_j = 1\bigg\} \quad \text{and} \\\label{eq:max_min_eig}
        W^{P,\bm{1}}_\textup{max}(B)
        &= \min_{p_j \in \R}\bigg\{\lambda_{\textup{max}}\bigg(\sum_{j=1}^k p_j\Gamma_{S_j}(B)\bigg) : \sum_{j=1}^k p_j = 1\bigg\}.
    \end{align}
\end{theorem}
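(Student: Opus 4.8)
The plan is the usual one for such minimax identities: an easy inequality straight from the definitions, then a harder ``attainment'' inequality. For the easy half, I would fix any $c\in W^{P,\bm 1}(B)$ together with a unit vector $\v\in\C^{n_1\cdots n_p}$ with $\v^*\Gamma_{S_j}(B)\v=c$ for every $j$. Then for every $p_1,\dots,p_k\in\R$ with $\sum_j p_j=1$ one gets $\v^*\bigl(\sum_j p_j\Gamma_{S_j}(B)\bigr)\v=\sum_j p_j c=c$, hence $\lambda_\textup{min}\bigl(\sum_j p_j\Gamma_{S_j}(B)\bigr)\le c\le\lambda_\textup{max}\bigl(\sum_j p_j\Gamma_{S_j}(B)\bigr)$. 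Maximizing (resp.\ minimizing) over $\vec p$ and then over $c\in W^{P,\bm 1}(B)$ gives $W^{P,\bm 1}_\textup{min}(B)\ge\max_{\vec p}\lambda_\textup{min}(\cdots)$ and $W^{P,\bm 1}_\textup{max}(B)\le\min_{\vec p}\lambda_\textup{max}(\cdots)$, with the extrema attained by compactness of $W^{P,\bm 1}(B)$ and convexity/coercivity of $\vec p\mapsto\lambda_\textup{max}\bigl(\sum_j p_j\Gamma_{S_j}(B)\bigr)$ modulo the affine subspace on which $\sum_j p_j\Gamma_{S_j}(B)$ is constant.

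I would prove the reverse inequality only for $W^{P,\bm 1}_\textup{max}(B)$, the one for $W^{P,\bm 1}_\textup{min}(B)$ being entirely analogous. Write $A_j:=\Gamma_{S_j}(B)$ and $c_+:=\min_{\sum_j p_j=1}\lambda_\textup{max}\bigl(\sum_j p_j A_j\bigr)$, attained at some $\vec{p}^{*}$; it then suffices to produce a unit vector $\v$ with $\v^*A_j\v=c_+$ for all $j$, i.e.\ to show $(c_+,\dots,c_+)\in W(A_1,\dots,A_k)$. As a first step I would check the weaker statement $(c_+,\dots,c_+)\in\conv W(A_1,\dots,A_k)$. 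This is a strong-duality statement: the semidefinite program $\max\{\tr(A_1\rho):\rho\succeq O,\ \tr\rho=1,\ \tr(A_j\rho)=\tr(A_1\rho)\ \forall j\}$ has Slater point $\rho=I/(n_1\cdots n_p)$ (here one uses that each partial transpose $\Gamma_{S_j}$ preserves trace, so the equality constraints all hold), and its dual is $\min\{c: \sum_j p_j A_j\preceq cI,\ \sum_j p_j=1\}$, whose value is $c_+$; so the primal optimum $c_+$ is attained at some density matrix $\rho^{*}$ with $\tr(A_j\rho^{*})=c_+$ for all $j$, which exhibits $(c_+,\dots,c_+)$ as an element of $\conv W(A_1,\dots,A_k)$. (Taking $p=2$ and $P=\{\emptyset,\{2\}\}$ here already yields Theorem~\ref{thm:W_diag_equals_p}, because then $W(A_1,A_2)=W(B+iB^\Gamma)$ is convex, so $\conv W=W$ and nothing more is needed.)

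The remaining task is to upgrade membership in $\conv W(A_1,\dots,A_k)$ to membership in $W(A_1,\dots,A_k)$ itself, and this is the crux. Since $(c_+,\dots,c_+)$ maximizes the linear functional $x\mapsto\langle\vec{p}^{*},x\rangle$ over $\conv W(A_1,\dots,A_k)$, the corresponding exposed face equals $\conv W(A_1|_E,\dots,A_k|_E)$, where $E$ is the top eigenspace of $M:=\sum_j p^{*}_j A_j$, and moreover $M|_E=c_+ I_E$. I would then induct on $\dim E$. If $A_1|_E=\dots=A_k|_E$ (in particular if $\dim E=1$), this common matrix is $c_+I_E$ and any unit vector of $E_\C$ is the desired witness. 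Otherwise the span $V$ of $\{A_j|_E-c_+I_E\}_j$ is a nonzero subspace of symmetric matrices containing no definite matrix, since a definite element of $V$ would give a perturbation of $\vec{p}^{*}$ inside $\{\sum_j p_j=1\}$ strictly lowering $\lambda_\textup{max}$. From here one either exhibits a nonzero negative-semidefinite element of $V$, which produces a new minimizer whose associated matrix still attains $\lambda_\textup{max}=c_+$ but now on a proper subspace of $E$, so the induction hypothesis applies there; or else $V$ has no nonzero semidefinite element, in which case $\dim V$ (equivalently, the number of constraints defining the spectrahedron $\{\rho\succeq O:\tr\rho=1,\ \tr(A_1|_E\,\rho)=\dots=\tr(A_k|_E\,\rho)\}$) is small enough—this is where the hypothesis $|P|=k\le 2^{p-1}$ enters—that this spectrahedron has a rank-at-most-$2$ extreme point $\x\x^T+\y\y^T$, whence $\x+i\y\in E_\C$ is the witness.

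The hard part will be exactly this last step. Because the joint numerical range of four or more Hermitian matrices need not be convex, one genuinely has to realize the extremal diagonal point as $(\v^*A_1\v,\dots,\v^*A_k\v)$ for an honest unit vector $\v$, rather than merely as a mixture of such points, and the rank-reduction/eigenspace-recursion sketched above is the device for doing so; verifying that this recursion terminates and that the ``no definite element of $V$'' property is preserved along the way is where essentially all the work lies. Everything else—the easy direction, the SDP duality, the reduction to the diagonal slice of $\conv W$, and the specialization $p=2$ that recovers Theorem~\ref{thm:W_diag_equals_p}—is routine.
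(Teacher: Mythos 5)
Your easy direction coincides with the paper's: pick a witnessing unit vector $\v$ for a point of $W^{P,\bm 1}(B)$ and use the Rayleigh quotient to sandwich the common value between $\lambda_{\textup{min}}$ and $\lambda_{\textup{max}}$ of any affine combination with $\sum_j p_j = 1$. The divergence is entirely in the reverse direction, and there your route is genuinely different from the paper's. The paper does \emph{not} pass through $\conv W(A_1,\dots,A_k)$; instead, for fixed $\varepsilon>0$ it forms $A_j=\Gamma_{S_1}(B)-\Gamma_{S_{j+1}}(B)$ and $C=\Gamma_{S_1}(B)-(W^{P,\bm 1}_{\textup{max}}(B)+\varepsilon)I$, rules out a PSD Farkas infeasibility certificate by restricting to rank-one $Y=\y\y^T$ (where the equality constraints force $\y^T\Gamma_{S_1}(B)\y=\dots=\y^T\Gamma_{S_k}(B)\y\le W^{P,\bm 1}_{\textup{max}}(B)$, contradicting $\tr(CY)\ge 0$), reads off the coefficients $p_j$ from the resulting positive-definiteness certificate, and lets $\varepsilon\to 0$ using convexity of $\vec p\mapsto\lambda_{\textup{max}}(\sum_j p_j\Gamma_{S_j}(B))$.

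You, by contrast, go through $\conv W(A_1,\dots,A_k)$ via SDP strong duality and then try to upgrade membership in $\conv W$ to membership in $W$ itself. You have correctly identified that upgrade as the genuine content of the statement for $k\ge 4$, where the joint numerical range need not be convex, but your sketch does not close it. Two concrete problems: (i) the hypothesis $|P|=k\le 2^{p-1}$ that your final step leans on is not a hypothesis of the theorem --- the statement allows any non-empty $P\subseteq\mathcal P([p])$, and the paper's remark that taking $|P|>2^{p-1}$ adds nothing is an observation about redundancy of the bound, not a restriction on $P$; and (ii) even granting it, the ``rank-at-most-$2$ extreme point'' claim does not follow from the usual Barvinok--Pataki bound, which only gives an extreme point of rank $r$ with $\binom{r+1}{2}$ at most the number of affine constraints, and that number scales with $k$ here. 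The eigenspace recursion is also not shown to terminate or to preserve the ``$V$ contains no definite element'' property along the descent. So the reverse inequality in your proposal has a genuine gap; the ingredient you are missing is exactly the paper's rank-one certificate step inside the Farkas argument, which replaces the $\conv W$-to-$W$ passage you were trying to carry out by elementary convex geometry.
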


\begin{proof}
    We only prove the lower equality for sake of brevity. Let $\x \in \R^{n_1\cdots n_p}$ be a unit vector for which $\x^*\Gamma_{S_j}(B)\x = W^{P,\bm{1}}_\textup{max}(B)$ for all $1 \leq j \leq k$. If $\sum_{j=1}^k p_j = 1$ then
    \[
        \x^* \bigg( \sum_{j=1}^k p_j\Gamma_{S_j}(B)\bigg) \x
        = \sum_{j=1}^k p_j \big(\x^* \Gamma_{S_j}(B) \x\big)
        = \sum_{j=1}^k p_j W^{P,\bm{1}}_\textup{max}(B) = W^{P,\bm{1}}_\textup{max}(B),
    \]
    and thus $W^{P,\bm{1}}_\textup{max}(B) \leq \lambda_{\textup{max}}\big(\sum_{j=1}^k p_j\Gamma_{S_j}(B)\big)$.
    
    To see that opposite inequality is attained for some particular choice of $p_1, p_2, \ldots, p_k$ (and thus complete the proof), we make use of Farkas' Lemma from semidefinite programming (see \cite[Lemma~3.3]{Lov03}, for example), which we state here for clarity:\medskip
    
    \noindent\textbf{Farkas' Lemma.} Let $A_1,\ldots,A_m$ and $C$ be real symmetric matrices. Then there exist $z_1,\ldots,z_m \in \R$ such that $z_1A_1 + \cdots + z_mA_m - C$ is positive definite if and only if there does \emph{not} exist a non-zero (symmetric) PSD matrix $Y$ such that $\tr(CY) \geq 0$ and $\tr(A_jY) = 0$ for all $1 \leq j \leq m$.\medskip
    
    To make use of this lemma, we fix $\varepsilon > 0$, set $m = k-1$, and choose the following matrices $A_1,\ldots,A_m$, and $C$:
    \begin{itemize}
        \item $A_j = \Gamma_{S_1}(B) - \Gamma_{S_{j+1}}(B)$ for all $1 \leq j \leq m$, and
        
        \item $C = \Gamma_{S_1}(B) - \big(W^{P,\bm{1}}_\textup{max}(B) + \varepsilon\big)I$.
    \end{itemize}
    We claim that there does not exist a non-zero (symmetric) PSD matrix $Y$ such that $\tr(CY) \geq 0$ and $\tr(A_jY) = 0$ for all $1 \leq j \leq m$. To verify this claim, it suffices (thanks to the spectral decomposition) to consider the case when $Y = \y\y^T$ has rank $1$, and by rescaling $Y$ we can assume without loss of generality that $\|\y\| = 1$. Then $\tr(A_jY) = 0$ for all $1 \leq j \leq m$ implies $\y^T\Gamma_{S_1}(B)\y = \y^T\Gamma_{S_{j+1}}(B)\y$ for all $1 \leq j \leq m$, so $\y^T\Gamma_{S_1}(B)\y \leq W^{P,\bm{1}}_\textup{max}(B)$. The condition $\tr(CY) \geq 0$ implies $\y^T\Gamma_{S_1}(B)\y \geq W^{P,\bm{1}}_\textup{max}(B) + \varepsilon$, which cannot also be true, so $Y$ does not exist.
    
    Farkas' Lemma thus tells us that there exist $z_1,\ldots,z_m \in \R$ such that $z_1A_1 + \cdots + z_mA_m - C$ is positive definite. Plugging in our particular choices of $A_1, \ldots, A_m$ and $C$ shows that there exist $z_1,\ldots,z_m \in \R$ for which
    \[
        \big(W^{P,\bm{1}}_\textup{max}(B) + \varepsilon\big)I + \left(\sum_{j=1}^m z_j - 1\right)\Gamma_{S_1}(B) - \sum_{j=1}^m z_j\Gamma_{S_{j+1}}(B)
    \]
    is positive definite. For $1 \leq j \leq m = k-1$ we set $p_j = z_j$, and we set $p_k = 1 - \sum_{j=1}^{m}z_j$ so that $\sum_{j=1}^{k}p_j = 1$. It follows that the matrix
    \[
        \big(W^{P,\bm{1}}_\textup{max}(B) + \varepsilon\big)I - \sum_{j=1}^k p_j\Gamma_{S_{j}}(B)
    \]
    is positive definite, so
    \[
        W^{P,\bm{1}}_\textup{max}(B) + \varepsilon > \lambda_{\textup{max}}\bigg(\sum_{j=1}^k p_j\Gamma_{S_j}(B))\bigg).
    \]
    Since $\varepsilon > 0$ was arbitrary and the function that we are minimizing in Equation~\eqref{eq:max_min_eig} is a convex function of $p_1,\ldots,p_k$, we conclude that there exist $p_1,\ldots,p_k$ for which $\sum_{j=1}^k p_j = 1$ and $W^{P,\bm{1}}_\textup{max}(B) \geq \lambda_{\textup{max}}(\sum_{j=1}^k p_j\Gamma_{S_j}(B)))$, completing the proof.
\end{proof}

\noindent \textbf{Acknowledgements.} We thank Peter Selinger for asking a helpful question that led to Remark~\ref{rem:why1i}. N.J.\ was supported by NSERC Discovery Grant RGPIN-2022-04098.

%-----------------------------------------------------------------------------%
\bibliographystyle{alpha}
\bibliography{bib}
%-----------------------------------------------------------------------------%

\appendix
\section{Implementation of Theorem~\ref{thm:mu_minmax_from_num_range}}\label{sec:implementation}

We now describe two methods of numerically computing $W^{1+i}_\textup{min}(B)$ and $W^{1+i}_\textup{max}(B)$, both of which we have implemented in MATLAB \cite{Pip22}.

The conceptually easiest way to approximate the numerical range of a given matrix (and thus compute $W^{1+i}_\textup{min}(B)$ and $W^{1+i}_\textup{max}(B)$) is to simply generate a large number of random unit vectors $\v$ and compute $\v^*A\v$ for each of them. However, this method only yields a vague semblance of some points on the interior of $W(A)$, and typically does not properly demonstrate the actual shape of the numerical range unless an astronomical number of random unit vectors are chosen. A much better method is demonstrated in~\cite[Chapter~1.5]{HJ91}, which we outline here.

Suppose $A \in M_n(\C)$ has Cartesian decomposition $A = H(A) + S(A)$, where
\[
    H(A)=\frac{1}{2}(A+A^*) \quad \text{and} \quad S(A) = \frac{1}{2}(A - A^*)
\]
denote the Hermitian and skew-Hermitian pieces of $A$, respectively. Then $H(A)$ acts as a sort of orthogonal projection on to the real line for the numerical range: $W(H(A)) = \mathrm{Re}(W(A))$. It is also the case that $W(H(A))$ is simply the closed interval from $\lambda_\textup{min}(H(A))$ to $\lambda_\textup{max}(H(A))$, since Hermitian matrices are normal, and the numerical range of a normal matrix is the convex hull of its eigenvalues. When we combine these facts with the observation that $e^{-i\theta}W(e^{i\theta}A) = W(A)$, we can obtain a boundary point of $W(A)$ whose tangent is normal to any given angle $\theta$
by first rotating $A$, computing the largest eigenvalue of the Hermitian part of the new matrix, and conjugating the original $A$ by the corresponding eigenvector.
Some of these steps are illustrated in Figure~\ref{fig:compute_num_range}, where $\x$ is the eigenvector corresponding to the eigenvalue $\lambda_\textup{max}(H(e^{i\theta}A))$.

\begin{figure}[tbh]
\begin{centering}
	\begin{tikzpicture}[scale=0.6]
		% Axes
		\draw [<->] (-10, 0) -- (10, 0) node[anchor=west] {$\mathrm{Re}$}; % X axis
		\draw [->] (0, 0) -- (0, 6) node[anchor=south] {$\mathrm{Im}$}; % Y axis
		
		% W(A)
		\draw [fill=gray!50, fill opacity=0.8]
(2.000000, 0.000000) -- (1.998245, -0.083751) -- (1.992985, -0.167355) -- (1.984229, -0.250666) -- (1.971992, -0.333537) -- (1.956295, -0.415823) -- (1.937166, -0.497379) -- (1.914638, -0.578063) -- (1.888752, -0.657733) -- (1.859552, -0.736249) -- (1.827090, -0.813473) -- (1.791423, -0.889270) -- (1.752613, -0.963507) -- (1.710728, -1.036054) -- (1.665842, -1.106783) -- (1.618033, -1.175570) -- (1.567386, -1.242295) -- (1.513990, -1.306841) -- (1.457937, -1.369094) -- (1.399326, -1.428945) -- (1.338261, -1.486289) -- (1.274847, -1.541026) -- (1.209198, -1.593059) -- (1.141427, -1.642298) -- (1.071653, -1.688655) -- (1.000000, -1.732050) -- (0.926592, -1.772407) -- (0.851558, -1.809654) -- (0.775031, -1.843726) -- (0.697144, -1.874563) -- (0.618033, -1.902113) -- (0.537839, -1.926325) -- (0.456701, -1.947157) -- (0.374762, -1.964574) -- (0.292166, -1.978544) -- (0.209056, -1.989043) -- (0.125581, -1.996053) -- (0.041884, -1.999561) -- (-0.041884, -1.999561) -- (-0.125581, -1.996053) -- (-0.209056, -1.989043) -- (-0.292166, -1.978544) -- (-0.374762, -1.964574) -- (-5.228350, -0.973578) -- (-5.268919, -0.963162) -- (-5.309016, -0.951056) -- (-5.348572, -0.937281) -- (-5.387515, -0.921863) -- (-5.425779, -0.904827) -- (-5.463296, -0.886203) -- (-5.500000, -0.866025) -- (-5.535826, -0.844327) -- (-5.570713, -0.821149) -- (-5.604599, -0.796529) -- (-5.637423, -0.770513) -- (-5.669130, -0.743144) -- (-5.699663, -0.714472) -- (-5.728968, -0.684547) -- (-5.756995, -0.653420) -- (-5.783693, -0.621147) -- (-5.809016, -0.587785) -- (-5.832921, -0.553391) -- (-5.855364, -0.518027) -- (-5.876306, -0.481753) -- (-5.895711, -0.444635) -- (-5.913545, -0.406736) -- (-5.929776, -0.368124) -- (-5.944376, -0.328866) -- (-5.957319, -0.289031) -- (-5.968583, -0.248689) -- (-5.978147, -0.207911) -- (-5.985996, -0.166768) -- (-5.992114, -0.125333) -- (-5.996492, -0.083677) -- (-5.999122, -0.041875) -- (-6.000000, 0.000000) -- (-5.999122, 0.041875) -- (-5.996492, 0.083677) -- (-5.992114, 0.125333) -- (-5.985996, 0.166768) -- (-5.978147, 0.207911) -- (-5.968583, 0.248689) -- (-5.957319, 0.289031) -- (-5.944376, 0.328866) -- (-5.929776, 0.368124) -- (-5.913545, 0.406736) -- (-5.895711, 0.444635) -- (-5.876306, 0.481753) -- (-5.855364, 0.518027) -- (-5.832921, 0.553391) -- (-5.809016, 0.587785) -- (-5.783693, 0.621147) -- (-5.756995, 0.653420) -- (-5.728968, 0.684547) -- (-5.699663, 0.714472) -- (-5.669130, 0.743144) -- (-5.637423, 0.770513) -- (-5.604599, 0.796529) -- (-5.570713, 0.821149) -- (-5.535826, 0.844327) -- (-5.500000, 0.866025) -- (-5.463296, 0.886203) -- (-5.425779, 0.904827) -- (-5.387515, 0.921863) -- (-5.348572, 0.937281) -- (-5.309016, 0.951056) -- (-5.268919, 0.963162) -- (-5.228350, 0.973578) -- (-0.374762, 1.964574) -- (-0.292166, 1.978544) -- (-0.209056, 1.989043) -- (-0.125581, 1.996053) -- (-0.041884, 1.999561) -- (0.041884, 1.999561) -- (0.125581, 1.996053) -- (0.209056, 1.989043) -- (0.292166, 1.978544) -- (0.374762, 1.964574) -- (0.456701, 1.947157) -- (0.537839, 1.926325) -- (0.618033, 1.902113) -- (0.697144, 1.874563) -- (0.775031, 1.843726) -- (0.851558, 1.809654) -- (0.926592, 1.772407) -- (0.999999, 1.732050) -- (1.071653, 1.688655) -- (1.141427, 1.642298) -- (1.209198, 1.593059) -- (1.274847, 1.541026) -- (1.338261, 1.486289) -- (1.399326, 1.428945) -- (1.457937, 1.369094) -- (1.513990, 1.306841) -- (1.567386, 1.242295) -- (1.618033, 1.175570) -- (1.665842, 1.106783) -- (1.710728, 1.036054) -- (1.752613, 0.963507) -- (1.791423, 0.889270) -- (1.827090, 0.813473) -- (1.859552, 0.736249) -- (1.888752, 0.657733) -- (1.914638, 0.578063) -- (1.937166, 0.497379) -- (1.956295, 0.415823) -- (1.971992, 0.333537) -- (1.984229, 0.250666) -- (1.992985, 0.167355) -- (1.998245, 0.083751) -- cycle;

		% W(R)
		\draw [fill=gray!50, fill opacity=0.8]
(4.535533, 3.535533) -- (4.534656, 3.493658) -- (4.532026, 3.451856) -- (4.527648, 3.410200) -- (4.521529, 3.368765) -- (4.513681, 3.327622) -- (4.504117, 3.286844) -- (4.492853, 3.246502) -- (4.479910, 3.206667) -- (4.465310, 3.167409) -- (4.449079, 3.128797) -- (4.431245, 3.090898) -- (4.411840, 3.053780) -- (4.390898, 3.017506) -- (1.665842, -1.106783) -- (1.618033, -1.175570) -- (1.567386, -1.242295) -- (1.513990, -1.306841) -- (1.457937, -1.369094) -- (1.399326, -1.428945) -- (1.338261, -1.486289) -- (1.274847, -1.541026) -- (1.209198, -1.593059) -- (1.141427, -1.642298) -- (1.071653, -1.688655) -- (1.000000, -1.732050) -- (0.926592, -1.772407) -- (0.851558, -1.809654) -- (0.775031, -1.843726) -- (0.697144, -1.874563) -- (0.618033, -1.902113) -- (0.537839, -1.926325) -- (0.456701, -1.947157) -- (0.374762, -1.964574) -- (0.292166, -1.978544) -- (0.209056, -1.989043) -- (0.125581, -1.996053) -- (0.041884, -1.999561) -- (-0.041884, -1.999561) -- (-0.125581, -1.996053) -- (-0.209056, -1.989043) -- (-0.292166, -1.978544) -- (-0.374762, -1.964574) -- (-0.456701, -1.947157) -- (-0.537839, -1.926325) -- (-0.618033, -1.902113) -- (-0.697144, -1.874563) -- (-0.775031, -1.843726) -- (-0.851558, -1.809654) -- (-0.926592, -1.772407) -- (-1.000000, -1.732050) -- (-1.071653, -1.688655) -- (-1.141427, -1.642298) -- (-1.209198, -1.593059) -- (-1.274847, -1.541026) -- (-1.338261, -1.486289) -- (-1.399326, -1.428945) -- (-1.457937, -1.369094) -- (-1.513990, -1.306841) -- (-1.567386, -1.242295) -- (-1.618033, -1.175570) -- (-1.665842, -1.106783) -- (-1.710728, -1.036054) -- (-1.752613, -0.963507) -- (-1.791423, -0.889270) -- (-1.827090, -0.813473) -- (-1.859552, -0.736249) -- (-1.888752, -0.657733) -- (-1.914638, -0.578063) -- (-1.937166, -0.497379) -- (-1.956295, -0.415823) -- (-1.971992, -0.333537) -- (-1.984229, -0.250666) -- (-1.992985, -0.167355) -- (-1.998245, -0.083751) -- (-2.000000, 0.000000) -- (-1.998245, 0.083751) -- (-1.992985, 0.167355) -- (-1.984229, 0.250666) -- (-1.971992, 0.333537) -- (-1.956295, 0.415823) -- (-1.937166, 0.497379) -- (-1.914638, 0.578063) -- (-1.888752, 0.657733) -- (-1.859552, 0.736249) -- (-1.827090, 0.813473) -- (-1.791423, 0.889270) -- (-1.752613, 0.963507) -- (-1.710728, 1.036054) -- (-1.665842, 1.106783) -- (-1.618033, 1.175570) -- (-1.567386, 1.242295) -- (-1.513990, 1.306841) -- (-1.457937, 1.369094) -- (-1.399326, 1.428945) -- (-1.338261, 1.486289) -- (-1.274847, 1.541026) -- (-1.209198, 1.593059) -- (-1.141427, 1.642298) -- (2.999707, 4.379861) -- (3.035533, 4.401559) -- (3.072237, 4.421737) -- (3.109754, 4.440360) -- (3.148018, 4.457397) -- (3.186961, 4.472815) -- (3.226516, 4.486590) -- (3.266614, 4.498696) -- (3.307183, 4.509112) -- (3.348152, 4.517821) -- (3.389450, 4.524806) -- (3.431005, 4.530055) -- (3.472743, 4.533560) -- (3.514591, 4.535314) -- (3.556476, 4.535314) -- (3.598324, 4.533560) -- (3.640062, 4.530055) -- (3.681616, 4.524806) -- (3.722915, 4.517821) -- (3.763884, 4.509112) -- (3.804453, 4.498696) -- (3.844550, 4.486590) -- (3.884105, 4.472815) -- (3.923049, 4.457397) -- (3.961313, 4.440360) -- (3.998829, 4.421737) -- (4.035533, 4.401559) -- (4.071360, 4.379861) -- (4.106247, 4.356683) -- (4.140133, 4.332063) -- (4.172957, 4.306047) -- (4.204664, 4.278678) -- (4.235197, 4.250006) -- (4.264502, 4.220081) -- (4.292528, 4.188954) -- (4.319227, 4.156681) -- (4.344550, 4.123319) -- (4.368455, 4.088925) -- (4.390898, 4.053560) -- (4.411840, 4.017287) -- (4.431245, 3.980169) -- (4.449079, 3.942270) -- (4.465310, 3.903658) -- (4.479910, 3.864400) -- (4.492853, 3.824565) -- (4.504117, 3.784223) -- (4.513681, 3.743445) -- (4.521529, 3.702302) -- (4.527648, 3.660867) -- (4.532026, 3.619211) -- (4.534656, 3.577409) -- cycle;

		% W(H)
		\draw [thick] (-2,0) -- (4.535533,0);
		\node[circle, fill, inner sep=1.5pt] at (-2, 0) {};
		\node[circle, fill, inner sep=1.5pt] at (4.535533, 0) {};

		% Boundary Points
		\node[circle, fill, inner sep=1.5pt] at (4.535533, 3.535533) {};
		\node[circle, fill, inner sep=1.5pt] at (-5.707106, 0.707106) {};

		% Tangents
		\draw [dashed] (4.535533, 5) -- (4.535533, -1.8);
		\draw [dashed] (-6.707106, -0.292894) -- (-4.707106, 1.707106);

		% Labels
		\node at (-3.5, 0.6) {$W(A)$};
		\node at (2.25, 2.5) {$W(e^{i\theta}A)$};
		\draw[decorate,decoration={brace,amplitude=7pt}] (4.535533, -2.2) --node[anchor=north,shift={(0,-0.25)}]{$W(H(e^{i\theta}A))$} (-2, -2.2);
		\node[anchor=south west] at (4.535533, 0) {$\lambda_\textup{max}(H(e^{i\theta}A))$};
		\node[anchor=west] at (4.59, 3.535533) {$\x^*(e^{i\theta}A)\x$};
		\node[anchor=south east] at (-5.707106, 0.707106) {$\x^*A\x$};
	\end{tikzpicture}
\caption{An illustration of the steps used to compute a boundary point of $W(A)$.}
\label{fig:compute_num_range}
\end{centering}
\end{figure}
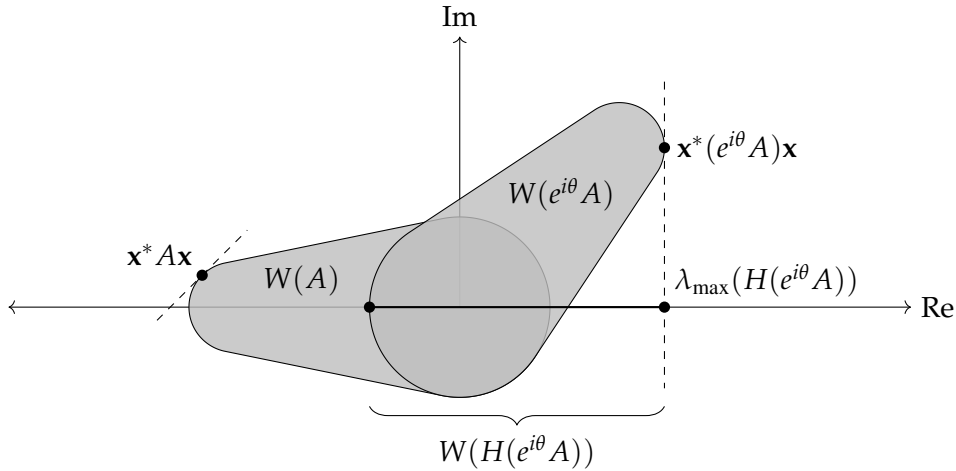

Once we have a set of boundary points, we can compute an inner approximation of the numerical range by simply connecting them,
or since we know the angle of the tangent at each point, we can compute an outer approximation
by finding the intersections of each half-plane described by a tangent.

Note however that evenly spacing choices of $\theta$ does \emph{not} determine evenly space points along the boundary of $W(A)$;
in particular if the boundary of $W(A)$ contains a flat region, this method will not yield any points along it,
it will only result in the endpoints being computed.

Our method for computing $W^{1+i}_\textup{min}(B)$ and $W^{1+i}_\textup{max}(B)$ efficiently is then to perform a binary search on values of $\theta$ to compute boundary points as close as possible both below and above the line of slope $1$.
This process may not yield boundary points arbitrarily close to the line, but it does yield points whose tangents are at arbitrarily close angles. If those points themselves are not sufficiently close, the tangent angle at $W^{1+i}_\textup{min}(B)$ or $W^{1+i}_\textup{max}(B)$ must lie between that of the surrounding points, and since their angles differ only by an arbitrarily small amount, we can approximate the value in question by drawing a straight line between them.

Another completely different method of computing $W^{1+i}_\textup{min}(B)$ and $W^{1+i}_\textup{max}(B)$ is suggested by Theorem~\ref{thm:W_diag_equals_p}. Since $\lambda_{\textup{min}}(pB + (1-p)B^\Gamma)$ and $\lambda_{\textup{max}}(pB + (1-p)B^\Gamma)$ are concave and convex functions of $p$, respectively, we can perform ternary search on $p$ to numerically find the value that maximizes the minimum eigenvalue or minimizes the maximum eigenvalue. We have found that, in practice, this method performs comparably to the method of computing the boundary of $W(A)$.

\end{document}